\numberwithin{equation}{section}
\newtheorem{theorem}{Theorem}
\newtheorem{proposition}{Proposition}
\theoremstyle{definition}
\theoremstyle{remark}
\newtheorem{example}{Example}[section]
\newtheorem{remark}{Remark}[section]
\renewcommand{\emptyset}{\varnothing}
\renewcommand{\epsilon}{\varepsilon}
\renewcommand{\phi}{\varphi}
\newcommand{\SL}{\operatorname{SL}(2,{\mathbb R})}
\newlength{\halfbls}\setlength{\halfbls}{.5\baselineskip}
\begin{document}
\title[Lyapunov exponents of Teichm\"uller curves]{A coding-free simplicity criterion for the Lyapunov exponents of Teichm\"uller curves}
\author{Alex Eskin}
\address{Department of Mathematics, University of Chicago, Chicago, Illinois 60637, USA}
\email{eskin\@@\,math.uchicago.edu}
\urladdr{http://www.math.uchicago.edu/$\sim$eskin}
\author{Carlos Matheus}
\address{Universit\'e Paris 13, Sorbonne Paris Cit\'e, LAGA, CNRS (UMR 7539), F-93430, Villetaneuse, France}
\email{matheus\@@\,impa.br}
\urladdr{http://www.impa.br/$\sim$cmateus}

\date{\today}

\begin{abstract}
In this note we show that the results of H. Furstenberg on the Poisson boundary of lattices of semisimple Lie groups allow to deduce simplicity properties of the Lyapunov spectrum of the Kontsevich-Zorich cocycle of Teichm\"uller curves in moduli spaces of Abelian differentials without the usage of codings of the Teichm\"uller flow. As an application, we show the simplicity of some Lyapunov exponents in the setting of (some) Prym Teichm\"uller curves of genus $4$ where a coding-based approach seems hard to implement because of the poor knowledge of the Veech group of these Teichm\"uller curves. Finally, we extend the discussion in this note to show the simplicity of Lyapunov exponents coming from (high weight) variations of Hodge structures associated to mirror quintic Calabi-Yau threefolds.
\end{abstract}

\maketitle

\setcounter{tocdepth}{1}
\tableofcontents

\section{Introduction}\label{s.introduction}

After the seminal works of A. Zorich~\cite{Z94} and M. Kontsevich~\cite{K},  it is known that the qualitative and quantitative features of the Lyapunov exponents of the Kontsevich-Zorich cocycle over the Teichm\"uller flow on the moduli space of unit area Abelian and quadratic differentials inspire several applications to different subjects such as: 
\begin{itemize}
\item the description of the deviation of ergodic averages of interval exchange transformations (\cite{Z94}, \cite{Fo02}), 
\item the computation of the diffusion rates of trajectories in the so-called Ehrenfest wind-tree model for Lorenz gases \cite{DHL},
\item the distinction between commensurability classes of all presently known non-arithmetic ball quotients \cite{Kappes:Moeller}, etc.
\end{itemize}

Concerning the qualitative properties of the Lyapunov spectrum of the Kontsevich-Zorich cocycle with respect to Teichm\"uller flow invariant probabilities, the so-called \emph{simplicity} property, that is, Lyapunov exponents have multiplicity $1$, is one of the most prominent because, from the theoretical point of view, it gives the most complete picture of the fiber-dynamics of the Kontsevich-Zorich cocycle. Indeed, by Oseledets theorem, the simplicity property implies that the action of the Kontsevich-Zorich cocycle on almost every fiber can be completely diagonalized, and, for example, this is a important information in the applications to deviations of ergodic averages of interval exchange transformations. 

To the best of the authors' knowledge, currently there exist two results for the simplicity of the Lyapunov spectrum of the Kontsevich-Zorich cocycle. The first one is the celebrated theorem of A. Avila and M. Viana~\cite{AV} (proving the former Kontsevich-Zorich conjecture) showing that the simplicity of Lyapunov exponents of the Kontsevich-Zorich cocycle with respect to the so-called \emph{Masur-Veech} probability measures. The second one is a recent result by the second author, M. M\"oller and J.-C. Yoccoz~\cite{MMY} giving a criterion (inspired by the techniques of~\cite{AV}) for the simplicity of Lyapunov exponents of invariant probability measures associated to the so-called \emph{arithmetic Teichm\"uller curves}.  

A common technical feature of the proofs of simplicity in the papers~\cite{AV} and~\cite{MMY} is the fact that they somehow depend on particular \emph{codings} (countable Markov partitions) for the Teichm\"uller geodesic flow that are well-adapted to the invariant probability measures at hand. In particular, A. Avila and M. Viana~\cite{AV} use the so-called Rauzy-Veech algorithm for their coding, and the second author, M. M\"oller and J.-C. Yoccoz~\cite{MMY} use certain finite extensions of the continued fraction algorithm for their coding. Of course, the usage of a coding normally makes life technically easier, but it has a drawback coming from the fact that codings adapted to \emph{arbitrary} invariant measures are not easy to produce in general. 

The goal of this note is the description of a ``coding-free''
simplicity criterion for invariant probabilities associated to
Teichm\"uller curves (i.e., closed $\SL$-orbits). Roughly speaking,
given a Teichm\"uller curve $\mathcal{T}$, we show that the knowledge
of the action on homology of affine diffeomorphisms of the translation
surfaces in $\mathcal{T}$ suffices to show the simplicity of the
Lyapunov spectrum \emph{without} using any coding for the geodesic
flow on $\mathcal{T}$. Here, the basic idea allowing us to forget
about codings is a theorem of H. Furstenberg~\cite{Fu71} ensuring
that, given a lattice $\Gamma$ in $\SL$, it is possible to find a
probability $\mu$ on $\Gamma$ with $\textrm{supp}(\mu)=\Gamma$ (i.e.,
$\mu$ gives non-zero weights to all elements of $\Gamma$) such that
$(\textrm{SO}(2,\mathbb{R}),\textrm{Leb})$ is the Poisson boundary of
$(\Gamma,\nu)$. This strategy was suggested to the
  first author by A.~Furman many years ago.

We organize this note as follows. In the next section, we quickly review some relevant definitions (moduli spaces of Abelian differentials, Teichm\"uller flow and $\SL$-action on these moduli spaces, Kontsevich-Zorich cocycle, Teichm\"uller curves, etc.), we state our main ``coding-free'' simplicity criterion (see Theorem~\ref{t.Lyapunov}), and we give the (short) proof of our simplicity criterion essentially by combining the results of H. Furstenberg~\cite{Fu71} on one hand, and Y. Guivarch and A. Raugi~\cite{GR,GR2}, I. Goldsheid and G. Margulis~\cite{GM}, and A. Avila and M. Viana~\cite{AV}. Then, we make an application of the main criterion to a class of Prym Teichm\"uller curves (see Theorem~\ref{t.Prym}) where coding-based approaches are hard to perform (because the Veech groups of these Teichm\"uller curves are essentially unknown). Finally, in the last section we show that our main criterion (in Theorem~\ref{t.Lyapunov}) also applies to variations of Hodge structures (of weight two) associated to mirror quintic Calabi-Yau threefolds. 

\begin{remark}\label{r.quadratic-diffs} The natural analog of Theorem~\ref{t.Lyapunov} below for Teichm\"uller curves in the moduli spaces of non-orientable quadratic differentials is also true. Indeed, our proof of Theorem~\ref{t.Lyapunov} can be easily adapted to the case of non-orientable quadratic differentials because these objects are naturally related to Abelian differentials via the usual canonical double-cover construction. However, we prefer to leave the detailed verification of this fact to the reader since we will not need it for our current applications of the ``coding-free'' simplicity criterion. 
\end{remark}

\section{Lyapunov exponents of Teichm\"uller curves}\label{s.Lyapunov}

\subsection{Moduli spaces, Teichm\"uller flow and Kontsevich-Zorich cocycle} The basic references for the facts stated without proofs in this subsection are the surveys \cite{Fo06} of G. Forni and \cite{Z06} of A. Zorich, and the references therein. 

We denote by $\mathcal{H}_g$ the moduli space of Abelian differentials on Riemann surfaces of genus $g\geq 1$, that is, $\mathcal{H}_g$ is the set of equivalence classes of pairs $(M,\omega)$ where $M$ is a compact Riemann surface of genus $g\geq 1$ and $\omega$ is a non-trivial Abelian differential (non-vanishing holomorphic $1$-form) on $M$ \emph{modulo} the action of the group $\textrm{Diff}^+(M)$ of orientation-preserving homeomorphisms of $M$.

In the literature, the pairs $(M,\omega)$ are called \emph{translation surface structures} because, by local integration of $\omega$ outside the set $\Sigma$ of its zeroes, one gets an atlas  on $M-\Sigma$ such that the change of coordinates is given by translations. We refer to this atlas as a translation atlas. 

The translation atlases are useful structures because they make clear that the group $\textrm{GL}^+(2,\mathbb{R})$ acts on $\mathcal{H}_g$ by post-composition with charts of translation atlases. As it turns out, from the point of view of ergodic theory, it is more efficient to study the $\textrm{GL}^+(2,\mathbb{R})$-action on $\mathcal{H}_g$ by focusing on the ``unit hyperboloid'' $\mathcal{H}_g^{(1)}$ of \emph{unit area} translation surfaces in $\mathcal{H}_g$, that is, the translation surfaces $(M,\omega)\in\mathcal{H}_g$ such that the area function $A(M,\omega):=(i/2)\int\omega\wedge\overline{\omega}$ equals $1$. In fact, despite the fact that this ``unit hyperboloid'' is not very far from $\mathcal{H}_g$ (one has to rescale the area of $(M,\omega)$ by multiplying $\omega$ by a constant to get in $\mathcal{H}_g^{(1)}$), it is preserved by the action of the subgroup $\SL$ of $\textrm{GL}^+(2,\mathbb{R})$, and, furthermore, it is possible to show that $\mathcal{H}_g^{(1)}$ supports plenty of $\SL$-invariant \emph{probability} measures. In other words, 
$\mathcal{H}_g^{(1)}$ is the correct object of study as far as ergodic-theoretical methods are concerned. 

In what follows, we'll need the following two basic facts about $\mathcal{H}_g^{(1)}$ and the corresponding $\SL$-action. Firstly, $\mathcal{H}_g^{(1)}$ is stratified into complex orbifolds $\mathcal{H}^{(1)}(k_1,\dots, k_s)$ of dimension $2g+s-1$ obtained by collecting Abelian differentials 
$(M,\omega)\in\mathcal{H}_g^{(1)}$ such that the list of orders of its zeroes is $\kappa=(k_1,\dots,k_s)$, $\sum k_m=2g-2$. Secondly, the $\SL$-action on $\mathcal{H}_g^{(1)}$ preserves each stratum $\mathcal{H}^{(1)}(\kappa)$.

In this language, the \emph{Teichm\"uller flow} $g_t$ on a stratum $\mathcal{H}_g^{(1)}(\kappa)$ of the moduli space $\mathcal{H}_g^{(1)}$ is simply the action of the diagonal subgroup $g_t:=\textrm{diag}(e^t, e^{-t})$ of $\SL$. In their seminal works, M. Kontsevich~\cite{K} and A. Zorich~\cite{Z99} introduced the so-called \emph{Kontsevich-Zorich cocyle} over the Teichm\"uller flow, a fundamental object with applications to the dynamics of interval exchange transformations, translation flows and billiards. The definition of this cocycle is the following. Firstly, one considers the \emph{Teichm\"uller space} $\widehat{\mathcal{H}_g}$ of Abelian differentials, that is, the set of equivalence classes of Abelian differentials $(M,\omega)$ modulo the action of the group $\textrm{Diff}_0^+(M)$ of orientation-preserving homeomorphisms of $M$ isotopic to the identity. Note that the moduli space $\mathcal{H}_g$ is the quotient $\mathcal{H}_g=\widehat{\mathcal{H}_g}/\Gamma_g$ of the Teichm\"uller space by the action of the \emph{mapping-class group} $\Gamma_g:=\textrm{Diff}^+(M)/\textrm{Diff}_0^+(M)$. The \emph{Hodge bundle}
$$H_g^1=\widehat{H_g^1}/\Gamma_g$$
over the moduli space $\mathcal{H}_g$ is the quotient of the trivial bundle $\widehat{H^1_g}=\widehat{\mathcal{H}_g}\times H^1(M,\mathbb{R})$ 
over the Teichm\"uller space by the diagonal action of the mapping-class group $\Gamma_g$ on both factors of $\widehat{H_g^1}$. In this language, the Kontsevich-Zorich cocycle $G_t^{KZ}$ is the quotient $G_t^{KZ}=\widehat{G_t^{KZ}}/\Gamma_g$ of the trivial cocycle $$\widehat{G_t^{KZ}}(\omega,[c])=(g_t(\omega),[c])$$
on $\widehat{H_g^1}$ by $\Gamma_g$.

Concerning the general features of the Lyapunov spectrum, we observe that the Kontsevich-Zorich cocycle $G_t^{KZ}$ preserves the natural (symplectic) intersection form on the fibers $H^1(M,\mathbb{R})$ of the Hodge bundle $H_g^1$. Since $H^1(M,\mathbb{R})$ is a real vector space of dimension $2g$, we have that $G_t^{KZ}$ is a symplectic cocycle and, thus, its Lyapunov spectrum with respect to any Teichm\"uller flow invariant probability $\mu$ has the form
$$\lambda_1^{\mu}\geq\lambda_2^{\mu}\geq\dots\lambda_g^{\mu}\geq-\lambda_g^{\mu}\geq\dots\geq-\lambda_2^{\mu}\geq-\lambda_1^{\mu}$$

Also, it is not hard to check that the Hodge bundle has a decomposition
$$H_g^1=H_{st}^1(M,\mathbb{R})\oplus H_{(0)}^1(M,\mathbb{R})$$
where $H_{st}^1(M,\mathbb{R})$ is the ``tautological'' 
  subbundle generated by the cohomology classes coming from the real
and imaginary parts of $\omega$ and $H_{(0)}^{1}(M,\mathbb{R})$ is the
symplectic orthogonal of $H_{st}^1(M,\mathbb{R})$. Furthermore, this
decomposition is invariant under the $SL(2,\mathbb{R})$ action and the Kontsevich-Zorich cocycle. 

A simple geometrical argument allows one to show that the $2$-dimensional (symplectic) tautological subbundle $H_{st}^1(M,\mathbb{R})$ contributes with the ``tautological'' Lyapunov exponents $\lambda_1^{\mu}=1$ and $-\lambda_1^{\mu}=-1$. Also, G. Forni \cite{Fo02} proved the ``spectral gap'' estimate $\lambda_2^{\mu}<1$. In particular, one has that the Lyapunov spectrum of $G_t^{KZ}$ with respect to any $g_t$-invariant probability $\mu$ has the form 
$$1>\lambda_2^{\mu}\geq\dots\geq\lambda_g^{\mu}\geq-\lambda_g^{\mu}\geq\dots\geq-\lambda_2^{\mu}>-1$$
where the Lyapunov exponents $\pm\lambda_i^{\mu}$, $i=2,\dots,g$ come from the restriction of $G_t^{KZ}$ to the subbundle $H_{(0)}^1(M,\mathbb{R})$. 

In the sequel, we'll be interested in the qualitative features of $\pm\lambda_i^{\mu}$, $i=2,\dots,g$, when $\mu$ is a $\SL$-invariant probability measure whose support is the smallest possible, namely, a single closed $\SL$-orbit, i.e., a \emph{Teichm\"uller curve}.

\subsection{Veech surfaces, Teichm\"uller curves and affine diffeomorphisms}
The basic reference for this subsection is the survey~\cite{HS} of P. Hubert and T. Schmidt.

Let $(M,\omega)\in\mathcal{H}_g^{(1)}$ be a translation surface. We denote by $\textrm{SL}(M,\omega)$ the stabilizer of $(M,\omega)$ with respect to the $\SL$-action on 
$\mathcal{H}_g^{(1)}$. In the literature, $\textrm{SL}(M,\omega)$ is called \emph{Veech group} of $(M,\omega)$. 

By a result of J. Smillie (see e.g. \cite{SW}), it is known that the $\SL$-orbit of $(M,\omega)$ is \emph{closed} in the moduli space if and only if the Veech group $\textrm{SL}(M,\omega)$ is a \emph{lattice} in $\SL$. In this case, $(M,\omega)$ is called a \emph{Veech surface} and its $\SL$-orbit in the moduli space is isomorphic to the unit cotangent bundle $\SL/\textrm{SL}(M,\omega)$ of the (finite-area) hyperbolic surface $\mathbb{H}/\textrm{SL}(M,\omega)$. For this reason, the closed $\SL$-orbits are called \emph{Teichm\"uller curves}.

By definition, the $\SL$-orbit of a Veech surface (i.e., a
Teichm\"uller curve) supports an unique (ergodic) $\SL$-invariant
probability measure. So, in the sequel, this measure will be always understood when we talk about the invariant measure associated to a given Teichm\"uller curve. 

In general, the Veech group $\textrm{SL}(M,\omega)$ is part of an
short exact sequence
$$1\to\textrm{Aut}(M,\omega)\to \textrm{SL}(M,\omega)\to \textrm{Aff}(M,\omega)\to 1$$
where $\textrm{Aut}(M,\omega)$, resp. $\textrm{Aff}(M,\omega)$, is the \emph{automorphism}, resp. \emph{affine}, group of diffeomorphisms of $(M,\omega)$, that is, the set of orientation-preserving homeomorphisms of $(M,\omega)$ preserving the set of zeroes of $\omega$ whose expression in the translation atlas of $(M,\omega)$ is a \emph{translation}, resp. \emph{affine}. 

In particular, \emph{when} $\textrm{Aut}(M,\omega)=\{\textrm{id}\}$, i.e., $(M,\omega)$ has no (non-trivial) automorphisms, we have that $\textrm{SL}(M,\omega)$ is \emph{isomorphic} to 
$\textrm{Aff}(M,\omega)$. This elementary fact has the following interesting consequence for the study of the Kontsevich-Zorich cocycle over Teichm\"uller curves. 

Firstly, by using the duality between of the homology and cohomology, we can think of $G_t^{KZ}$ as the quotient of the trivial cocycle 
$$(\omega,\gamma)\mapsto (g_t(\omega),\gamma)$$
on $\widehat{H_g^1}\times H_1(M,\mathbb{R})$ by the action of the mapping-class group $\Gamma_g$. Secondly, it is possible to show that, in genus $g\geq 2$, the affine group $\textrm{Aff}(M,\omega)$ injects into the mapping-class group $\Gamma_g$. Finally, one has that the stabilizer of the $\SL$-orbit of 
$(M,\omega)$ in $\Gamma_g$ is exactly $\textrm{Aff}(M,\omega)$. Therefore, when $\textrm{Aut}(M,\omega)=\{\textrm{id}\}$, the Kontsevich-Zorich cocycle $G_t^{KZ}$ over a closed $\SL$-orbit can be thought as the quotient of the trivial cocycle 
$$\SL\times H_1(M,\mathbb{R})\to\SL\times H_1(M,\mathbb{R})$$
by the natural action of $\textrm{SL}(M,\omega)$ (on the first factor it is the usual action and in the second factor the Veech group acts via the injections $\textrm{SL}(M,\omega)\to\textrm{Aff}(M,\omega)\to\Gamma_g$). 

In summary, the Kontsevich-Zorich cocycle over a Teichm\"uller curve is closely related to the action on homology of the affine group $\textrm{Aff}(M,\omega)$. In our subsequent discussion, we will systematically adopt this point of view on the Kontsevich-Zorich cocycle. 

Note that, by Poincar\'e duality, $G_t^{KZ}$ leaves invariant a decomposition of the (dual of the) Hodge bundle
$$H_1(M,\mathbb{R})=H_1^{st}(M,\mathbb{R})\oplus H_1^{(0)}(M,\mathbb{R})$$
where the tautological $2$-dimensional symplectic subbundle $H_1^{st}(M,\mathbb{R})$ carries the tautological Lyapunov exponents $\pm1$ and $H_1^{(0)}(M,\mathbb{R})$ is the symplectic orthogonal of $H_1^{st}(M,\mathbb{R})$. 

In the case of Teichm\"uller curves, Deligne's semisimplicity
theorem~\cite{Deligne} (see also~\cite{EMi} and~\cite{Fi}) says that there is an unique (possibly trivial) decomposition 
$$H_1^{(0)}(M,\mathbb{R})=H(1)\oplus\dots\oplus H(k)$$
where $H(i)$ are distinct isotypical components of $\SL$-invariant irreducible subbundles (i.e., $H(i)$ is a direct sum of isomorphic $\SL$-invariant irreducible representations). In this context, it is natural to study the Kontsevich-Zorich cocycle and the action of $\textrm{Aff}(M,\omega)$ restricted to each ``block'' $H(i)$. 

After setting up this notation, we are ready to state and proof our main ``coding-free'' simplicity criterion.

\subsection{A ``coding-free'' simplicity criterion for Lyapunov exponents}
Let $\mathcal{T}$ be a Teichm\"uller curve given by the $\SL$-orbit of a Veech surface $(M,\omega)$ without non-trivial automorphisms, and let $H(i)$ be a block consisting of a symplectic isotypical component of $\SL$-invariant irreducible of the (dual of the) Hodge bundle over 
$\mathcal{T}$. Denote by $\mathcal{G}_i$ the group of symplectic matrices obtained by restricting the action on homology of $\textrm{Aff}(M,\omega)$ to the block $H_i$ of dimension $2r(i)$ (say), and let $\lambda_1(i)\geq\dots\geq\lambda_{r(i)}(i)\geq-\lambda_{r(i)}(i)\geq\dots\geq-\lambda_1(i)$ be the Lyapunov exponents of the restriction of the Kontsevich-Zorich cocycle to $H(i)$ (with respect to the unique $\SL$-invariant probability supported on $\mathcal{T}$). 

For the statement of the simplicity criterion, we need to recall the following concepts coming from the papers~\cite{GR,GR2},~\cite{GM} and~\cite{AV}. Given a monoid $\mathcal{G}$ of matrices acting on a real vector space $H$, we say that 
\begin{itemize}
\item $\mathcal{G}$ is \emph{strongly irreducible} if it doesn't preserve a finite collection of proper subspaces of $H$;
\item $\mathcal{G}$ has the \emph{contraction property} whenever given any probability measure $\mu$ on $\mathbb{P}(H)$ such that $\mu(P)=0$ for any proper projective subspace $P$, one can find a sequence of elements $g_n\in\mathcal{G}$ such that $g_n\mu$ converges to a Dirac mass at some point of $\mathbb{P}(H)$.
\end{itemize} 
Also, if $\mathcal{G}$ is a monoid of \emph{symplectic} matrices on a real symplectic vector space $H$ of dimension $2d$, we say that 
\begin{itemize}
\item $\mathcal{G}$ is \emph{pinching} if $\mathcal{G}$ contains a \emph{pinching matrix} $A$, i.e., a matrix whose eigenvalues are real with distinct norms; 
\item $\mathcal{G}$ is \emph{twisting} if there are a \emph{pinching matrix} $A\in\mathcal{G}$ and a twisting matrix $B\in\mathcal{G}$ with respect to $A$ in the sense that $B(F)\cap F'=\{0\}$ whenever $F$ is an \emph{isotropic} $A$-invariant subspace of dimension $1\leq k\leq d$ and $F'$ is a \emph{co-isotropic} $A$-invariant subspace of dimension $2d-k$.
\end{itemize}

\begin{theorem}\label{t.Lyapunov} In the setting of the previous paragraphs, one has that: 
\begin{itemize}
\item[(a)] if $\mathcal{G}_i$ has the contraction property and $\mathcal{G}_i$ is strongly irreducible, then the ``top'' Lyapunov exponent $\lambda_1(i)$ of the block $H(i)$ is simple (i.e., $\lambda_1(i)>\lambda_2(i)$); 
\item[(b)] if the Zariski closure of $\mathcal{G}_i$ coincides with the full symplectic group $\textrm{Sp}(H)$, then the Lyapunov spectrum of the restriction of $G_t^{KZ}$ to $H(i)$ is simple;
\item[(c)] if $\mathcal{G}_i$ is pinching and twisting, then the Lyapunov spectrum of the restriction of $G_t^{KZ}$ to $H(i)$ is simple.
\end{itemize}
\end{theorem}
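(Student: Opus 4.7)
The plan is to reduce the simplicity of the Lyapunov exponents of $G_t^{KZ}|_{H(i)}$ to the simplicity of the Lyapunov spectrum of a suitable i.i.d.\ random walk on the monoid $\mathcal{G}_i$, bypassing codings of the Teichm\"uller flow entirely. Since $(M,\omega)$ has trivial automorphism group, the discussion preceding the theorem identifies $G_t^{KZ}$ over $\mathcal{T}\cong\SL/\Gamma$ (with $\Gamma=\textrm{SL}(M,\omega)$) with the quotient of the trivial cocycle on $\SL\times H_1(M,\mathbb{R})$ by the diagonal action of $\Gamma$ via $\Gamma\to\textrm{Aff}(M,\omega)\to\Gamma_g$. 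Restricting to the block $H(i)$ yields a homomorphism $\rho_i:\Gamma\to\mathcal{G}_i\subset\textrm{Sp}(H(i))$.

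The first step is to invoke Furstenberg's theorem: since $\Gamma$ is a lattice in $\SL$, there exists a probability measure $\mu$ on $\Gamma$ with $\textrm{supp}(\mu)=\Gamma$ for which the Poisson boundary of $(\Gamma,\mu)$ is $(\SO,\textrm{Leb})$. I would push $\mu$ forward to a measure $\mu_i:=(\rho_i)_*\mu$ on $\mathcal{G}_i$ and consider the associated i.i.d.\ random walk. Because $\textrm{supp}(\mu)=\Gamma$, the hypotheses on $\mathcal{G}_i$ imposed in (a), (b), (c) transfer verbatim to the monoid generated by $\textrm{supp}(\mu_i)$, which is what is actually needed in the random-walk statements below.

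In the second step I would apply the respective random-walk simplicity criteria to $\mu_i$: under (a), the combination of Furstenberg's theorem and the Guivarc'h-Raugi arguments yields simplicity of the top exponent of the random walk; under (b), Zariski density in $\textrm{Sp}(H(i))$ together with the Goldsheid-Margulis theorem delivers a simple random-walk spectrum; under (c), the pinching+twisting criterion of Avila-Viana delivers the same conclusion. Each of these is a purely discrete-time, random-walk statement and requires no continuous-time dynamics whatsoever.

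The main obstacle, and the real content of the reduction, is the third step: bridging the discrete-time Lyapunov spectrum of the random walk driven by $\mu_i$ with the continuous-time spectrum of $G_t^{KZ}|_{H(i)}$. This is precisely where the specific choice of Poisson boundary $(\SO,\textrm{Leb})$ provided by Furstenberg plays the decisive role. Both spectra can be expressed as integrals of the logarithmic norm of the cocycle against a suitable stationary measure on the boundary; for the Teichm\"uller flow this is the classical Furstenberg-type formula on $\SL/\Gamma$ with $\SO=K$ arising from the Iwasawa decomposition $\SL=KAN$, while for the random walk driven by $\mu_i$ it is the very definition of the Poisson boundary being $(\SO,\textrm{Leb})$. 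Matching these two integral representations shows that the two spectra have identical multiplicities (up to a positive normalization inherited from comparing hitting times of the random walk to arc-length on the geodesic flow), so that the simplicity assertions of (a), (b), (c) obtained in step two for the random walk transfer directly to $G_t^{KZ}|_{H(i)}$, completing the argument.
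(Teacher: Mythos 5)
Your overall architecture coincides with the paper's: Furstenberg's theorem supplies a full-support measure $\nu$ on $\Gamma=\textrm{SL}(M,\omega)$ with Poisson boundary $(\textrm{SO}(2,\mathbb{R}),\textrm{Leb})$, the discrete random-walk criteria of Guivarc'h--Raugi, Goldsheid--Margulis and Avila--Viana handle items (a), (b), (c) respectively, and the content lies in transferring the random-walk spectrum to the flow. Your first two steps are exactly the paper's and are fine. The gap is in your third step. You propose to identify the two spectra by ``matching integral representations'' of Furstenberg type on the boundary, but you never supply the mechanism that relates the cocycle evaluated along a random-walk trajectory $\gamma_n\cdots\gamma_1$ to the cocycle evaluated along a Teichm\"uller geodesic: knowing that the base points $\gamma_n\cdots\gamma_1\cdot i$ and $g_n(r_\theta\omega)$ are $o(n)$-close in $\mathbb{H}$ says nothing, by itself, about the matrices $\rho(\gamma_n\cdots\gamma_1)$ versus $G_n^{KZ}(r_\theta\omega)$, and a Furstenberg-type formula for the continuous-time Kontsevich--Zorich cocycle is not something you can simply write down and compare term by term (in particular not for the lower exponents and their multiplicities).

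The paper's bridge consists of two concrete ingredients that your proposal is missing. First, by Oseledets applied to the random walk on $\mathbb{H}$, almost every trajectory tracks a geodesic ray $t\mapsto g_t(r_\theta\omega)$ with sublinear error, $\textrm{dist}_{\mathbb{H}}(\gamma_n\cdots\gamma_1\cdot i,\,g_n(r_\theta\omega))=o(n)$. Second, and crucially, Forni's first-variation estimate $\bigl|\tfrac{d}{dt}\log\|G_t^{KZ}\|\bigr|\leq 1$ for the Hodge norm shows that the cocycle is $1$-Lipschitz with respect to hyperbolic distance in the base, so the sublinear tracking upgrades to
\begin{equation*}
\log\bigl\|\rho(\gamma_n\cdots\gamma_1)\,G_{-n}^{KZ}(r_\theta\omega)\bigr\|\;\leq\;\textrm{dist}_{\mathbb{H}}(\gamma_n\cdots\gamma_1\cdot i,\,g_n(r_\theta\omega))\;=\;o(n),
\end{equation*}
which forces $\lambda_{\rho(\nu)}(v)=\lambda_{KZ}(v)$ for every $v$, hence equality of the full spectra with multiplicities. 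The Poisson boundary being $(\textrm{SO}(2,\mathbb{R}),\textrm{Leb})$ enters only at the end, to guarantee that the limiting angle $\theta$ is distributed absolutely continuously with respect to Lebesgue, so that the almost-sure statement for the random walk becomes an almost-sure statement for the geodesic flow with respect to the $\SL$-invariant measure on the Teichm\"uller curve. Without Forni's Lipschitz bound (or an equivalent control of the Hodge norm along Teichm\"uller geodesics) your matching step does not go through, so you should add this estimate explicitly and replace the integral-representation argument by the tracking inequality above.
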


As we mentioned in the introduction, the proof of this result is not
very long due to the profound results of H. Furstenberg~\cite{Fu71},
Y. Guivarc'h and A. Raugi~\cite{GR}, I. Goldsheid and
G. Margulis~\cite{GM}, and A. Avila and M. Viana~\cite{AV} (see also
\cite[Lemme 3.9]{GR2}).

\begin{proof} By the results of H. Furstenberg~\cite{Fu71}, we have a
  probability measure $\nu$ on the Veech group $\textrm{SL}(M,\omega)$
  with $\textrm{supp}(\nu)=\textrm{SL}(M,\omega)$ (i.e., $\nu$ gives
  non-zero weights to \emph{all} elements of $\textrm{SL}(M,\omega)$)
  such that the \emph{Poisson boundary}\footnote{See, e.g., the survey
    ~\cite{Furman} of A. Furman for a gentle introduction to the
    Poisson boundary.} of $(\textrm{SL}(M,\omega),\nu)$ is
  $(\textrm{SO}(2,\mathbb{R}),\textrm{Leb})$. By the
    Osceledec multiplicative ergodic theorem applied to random walks
    in $\mathbb{H}$, a typical trajectory
    of the random walk in $\textrm{SL}(M,\omega) i \subset 
\mathbb{H}$  tracks a geodesic ray 
    in  $\mathbb{H}$ up to sublinear error (see, e.g., the (short) proof of Lemma 4.1 of~\cite{CE} where this is explained in more detail). In other words, for almost
    all sequences  $\gamma_i \in \textrm{SL}(M,\omega)$ (with respect
    to the measure $\nu \times \nu \dots \times \nu \dots$) there
    exists a geodesic ray $\{\alpha(t): t\in\mathbb{R}\}\subset\mathbb{H}$ such that  
\begin{displaymath}
\textrm{dist}_{\mathbb{H}}(\gamma_n \dots \gamma_1 \cdot i, \alpha(n)) = o(n). 
\end{displaymath}
We may write $\alpha(t)$ as $g_t(r_\theta \omega)$, where $r_\theta
= \begin{pmatrix} \cos \theta & \sin \theta \\ -\sin \theta & \cos
  \theta \end{pmatrix}$ and as above $g_t$ denotes the geodesic flow,
i.e. the action of $\begin{pmatrix} e^t & 0 \\ 0 &
  e^{-t} \end{pmatrix}$.  

By combining this with Forni's estimate 
  $$\frac{d}{dt}\log\|G_t^{KZ}(\omega)\|\leq 1$$
for all $\omega\in\mathcal{H}_g^{(1)}$ from \cite[\S{2}]{Fo02} (where $\|.\|$ denotes the so-called \emph{Hodge norm} on the Hodge bundle, see~\cite{Fo02}), we deduce that, for almost all sequences $\gamma_i \in
\textrm{SL}(M,\omega)$, there exists $\theta \in [0,2\pi)$ such that
\begin{eqnarray}\label{e.Forni-sublinear-error}
\log\|\rho(\gamma_n \dots \gamma_1) G^{KZ}_{-n}(r_\theta \omega))\| 
&\leq& \textrm{dist}_{\mathbb{H}}(\gamma_n\dots\gamma_1\cdot i, g_n(r_{\theta}\omega)) \\ &=& o(n) \nonumber,
\end{eqnarray}
where $\rho: SL(M,\omega) \to \mathrm{Sp}(2g,\mathbb{Z})$ 
denotes the action on the homology $H_1(M,\mathbb{R})$. Furthermore,
since the Poisson boundary of $(\textrm{SL}(M,\omega),\nu)$ is
$(\textrm{SO}(2,\mathbb{R}),\textrm{Leb})$, the probability
distribution of the ``angle'' $\theta$ is uniform with respect to
the Lebesgue measure on $[0,2\pi)$. (Any distribution
absolutely continuous with respect to Lebesgue measure would be
sufficient here.)

Hence, it follows that the Lyapunov exponents of
the Kontsevich-Zorich cocyle $G_t^{KZ}$ coincide with the Lyapunov
exponents of the random walk on (a subgroup of) $Sp(2g,\mathbb{Z})$
whose defining law is $\rho(\nu)$. Indeed, given a non-trivial element $v\in H_1(M,\mathbb{R})$ and denoting by $\lambda_{\rho(\nu)}(v)$ and $\lambda_{KZ}(v)$ the Lyapunov exponents of $v$ with respect to the random walk of law $\rho(\nu)$ and the Kontsevich-Zorich cocycle (resp.), we see from \eqref{e.Forni-sublinear-error} (and the definition of Lyapunov exponent) that
$$0=\lim\limits_{n\to\infty}\frac{1}{n}\log\frac{\|\rho(\gamma_n)\dots\rho(\gamma_1)v\|}{\|G_n^{KZ}(r_{\theta}\omega)v\|} = \lambda_{\rho(\nu)}(v)-\lambda_{KZ}(v)$$
Here, it is worth to point out that Forni's estimate played an important role because it allowed to transfer the fact (coming from Oseledets theorem) that random products of law $\nu$ track certain geodesic rays in the hyperbolic plane into the estimate \eqref{e.Forni-sublinear-error} that random products with law $\rho(\nu)$ ``track'' certain matrices of the Kontsevich-Zorich cocycle.

In particular, the Lyapunov exponents $$\lambda_1(i)\geq\dots\geq\lambda_{r(i)}(i)\geq-\lambda_{r(i)}(i)\geq\dots\geq-\lambda_1(i)$$ of the restriction of $G_t^{KZ}$ to the block $H(i)$ coincide with the Lyapunov exponents of random products with law $\rho(\nu)$ of the matrices of $\mathcal{G}_i$ (coming from the restriction to $H(i)$ of the action on homology\footnote{Recall that, by hypothesis, $(M,\omega)$ has no non-trivial automorphisms so that $\textrm{SL}(M,\omega)$ injects into $\textrm{Aff}(M,\omega)$.} of the elements of 
$\textrm{SL}(M,\omega)$). 

At this point, the proof of Theorem~\ref{t.Lyapunov} is complete, since the analogous results for products of 
independent identically distributed matrices are well known. 
Indeed, the item (a) follows from the results of Y.  Guivarc'h and
 A. Raugi~\cite{GR} saying that the top Lyapunov exponents of random
 products of elements of a monoid $\mathcal{G}$ is simple whenever
 $\mathcal{G}$ has the contraction property and $\mathcal{G}$ is
 strongly irreducible. Similarly, the item (b), resp. (c), follows
 from the results of I. Goldsheid and G. Margulis~\cite{GM},
 resp. A. Avila and M. Viana~\cite{AV} (see also
 \cite[Lemme~3.9]{GR2}), saying that the Lyapunov
 spectrum of random products of elements of a monoid $\mathcal{G}$ is
 simple whenever the Zariski closure of $\mathcal{G}$ coincides with
 the full symplectic group, resp. $\mathcal{G}$ is pinching and
 twisting.
\end{proof}

\begin{remark} The main reason for the statement of
Theorem~\ref{t.Lyapunov} to focus on Teichm\"uller curves comes from
the fact that Furstenberg's theorem~\cite{Fu71} concerns
homogenous spaces. In particular, it is an interesting open
question to know whether an analog of Furstenberg's theorem holds in
the non-homogenous setting of supports of general $\SL$-invariant
measures in moduli spaces of Abelian differentials.
\end{remark}

\section{Application to some Prym curves of genus 4}\label{s.Prym}

We start this section with a quick review of some features of the so-called \emph{Prym eigenforms} and their $\SL$-orbits. For more detailed expositions on this subject, see \cite{McM06} and \cite{LN}.

\subsection{Prym eigenforms and Weierstrass loci} Let $X$ be a Riemann surface equipped with a holomorphic involution $\rho:X\to X$ whose Prym variety
$$\textrm{Prym}(X,\rho):=\Omega^-(X)^*/H_1^-(X,\mathbb{Z})$$
is an Abelian variety of complex dimension 2 admitting real multiplication by the real order $\mathcal{O}_D$ of discriminant $D\in\mathbb{N}$, $D\equiv 0$ or $1$ (mod $4$). Here, $\Omega^-(X)$, resp. $H_1^-(X,\mathbb{Z})$, is the subset of $\rho$-anti-invariant holomorphic $1$-forms, resp., integral homology cycles, on
$X$. In this context, we say that $\omega\in\Omega^-(X) -\{0\}$ is a \emph{Prym eigenform} whenever $\mathcal{O}_D\cdot\omega\subset\mathbb{C}\omega$.

By a simple application of the Riemann-Hurwitz formula (see Theorem 3.1 of \cite{McM06} or Remark 2.4 of \cite{LN}), it is not hard to see that Prym eigenforms exist only when $X$ has genus $2\leq g(X)\leq 5$.

We denote by $\Omega E_D(k_1,\dots, k_s)$ the subset of Prym eigenforms with multiplication by $\mathcal{O}_D$ in a stratum $\mathcal{H}^{(1)}(k_1,\dots, k_s)$ of the moduli space $\mathcal{H}^{(1)}_g$.

The Prym eigenforms are interesting objects because, as it was shown by C. McMullen \cite{McM06}, $\Omega E_D(k_1,\dots, k_s)$ are closed $\SL$-invariant subsets of the stratum $\mathcal{H}(k_1,\dots, k_s)$.

For our current purposes, we will focus on the so-called \emph{Weierstrass loci} $\Omega E_D(2g-2)$, that is, the loci of Prym eigenforms in the minimal stratum $\mathcal{H}^{(1)}(2g-2)$. Again, by Riemann-Hurwitz formula (see, e.g., Remark 2.4 of \cite{LN}), one has that, for genus $g=5$, $\Omega E_D(8)=\emptyset$, that is, the Weierstrass loci are interesting only for $g=2, 3$ and $4$. Also, by a certain ``dimension counting'' argument, C. McMullen was also able to show that, for $g=2, 3, 4$, the Weierstrass loci $\Omega E_D(2g-2)$ consist of the union of finitely many Teichm\"uller curves.

In genus $2$, the number of connected components of the Weierstrass loci $\Omega E_D(2)$ was completely determined by C. McMullen \cite{McM05}. More recently, E. Lanneau and D.-M. Nguyen \cite{LN} completely determined the number of connected components of $\Omega E_D(4)$ (Weierstrass loci in genus $3$) and almost completely determined the number of connected components of $\Omega E_D(6)$ (Weierstrass loci in genus $4$). Very roughly speaking, the basic strategy (introduced by C. McMullen in \cite{McM05}) to compute the number of connected components of $\Omega E_D(2g-2)$ is the following: firstly, one produces \emph{prototypes} for cylinder decompositions along periodic directions of translation surfaces (Abelian differentials) in $\Omega E_D(2g-2)$, that is, canonical representatives for the cusps of the Teichm\"uller curves inside
$\Omega E_D(2g-2)$; secondly, one uses \emph{butterfly moves} to connect prototypes and eventually determine the number of connected components of $\Omega E_D(2g-2)$.

For later use, we depict below Models A and B prototypes of E. Lanneau and D.-M. Nguyen (see Figures 18 and 19 of \cite{LN}) for cylinder decompositions along periodic directions of translation surface in $\Omega E_D(6)$ (i.e., the Weierstrass loci in genus $4$).

\begin{figure}[htb!]
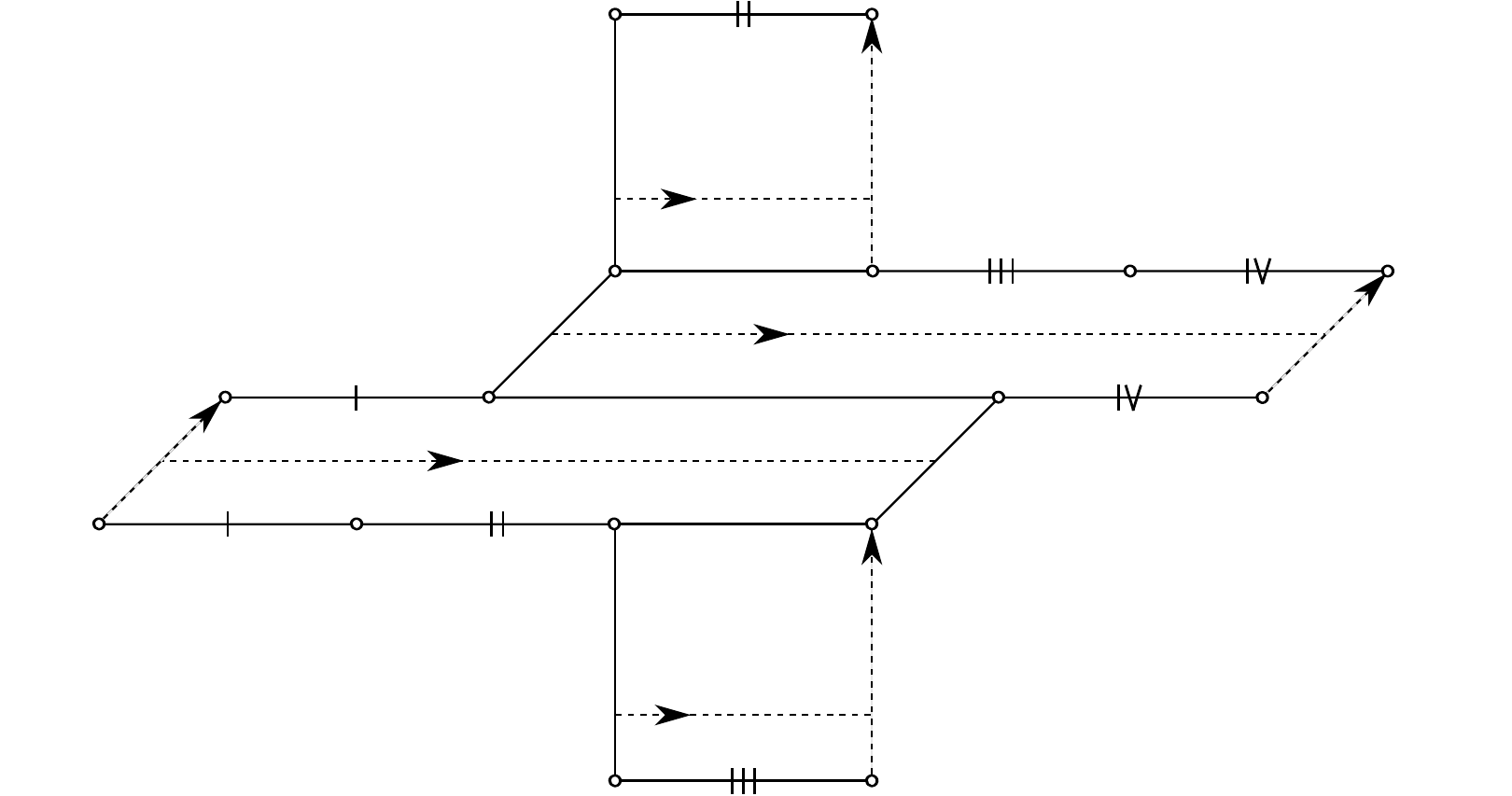
\caption{Model A cylinder decomposition of $(X,\omega)\in\Omega E_D(6)$.}\label{f.1}
\end{figure}

\begin{figure}[htb!]
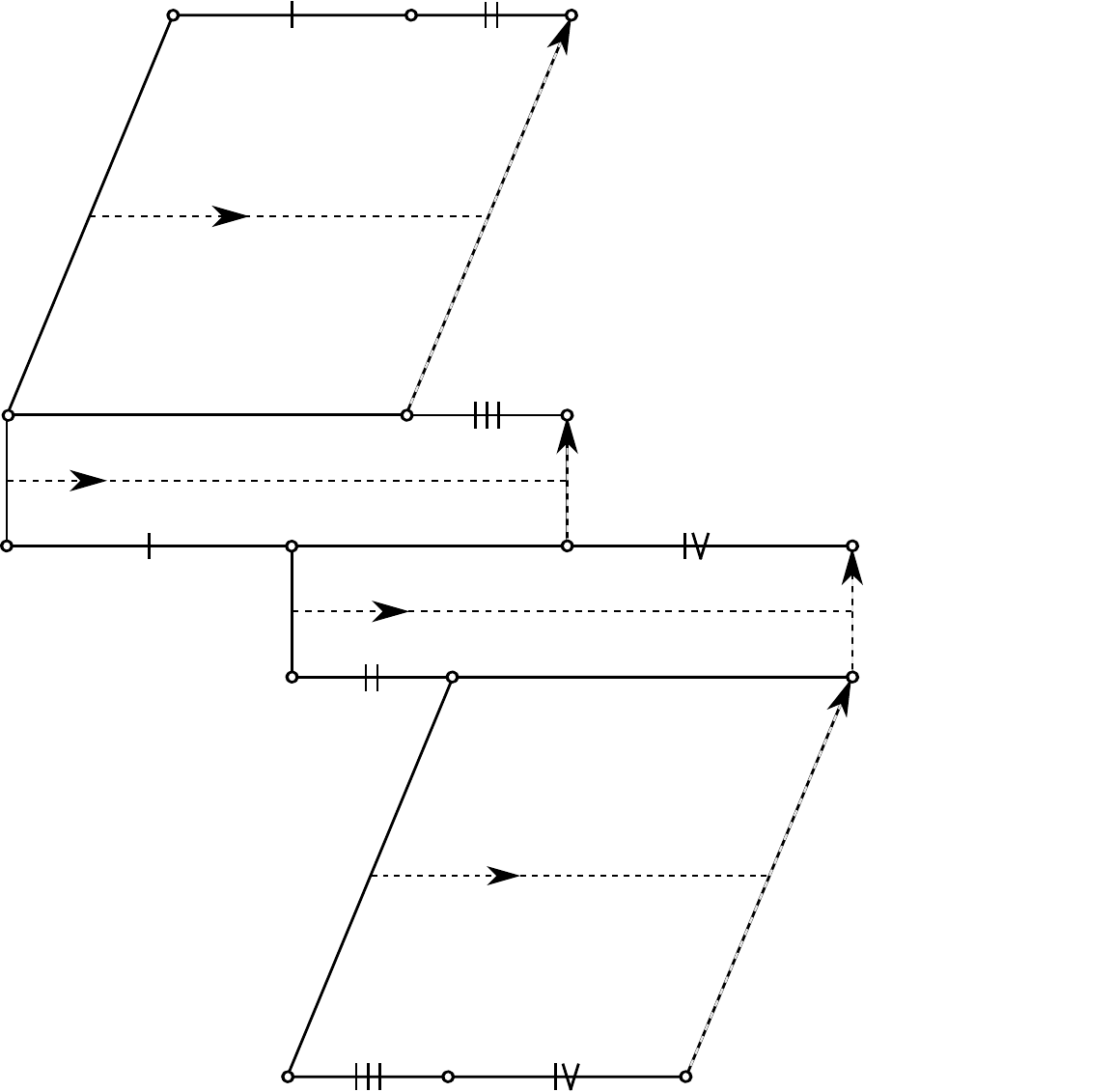
\caption{Model B cylinder decomposition of $(X,\omega)\in\Omega E_D(6)$.}\label{f.2}
\end{figure}

The receipt for the construction of $(X,\omega)\in\Omega E_D(6)$ from Models A and B is the following. Consider $w,h,t,e\in\mathbb{N}$ integral parameters such that $D=e^2+4wh$, $\gcd(w,h,t,e)=1$, $w,h>0$ and $0\leq t<\gcd(w,h)$.

If $w>2(e+2h)$, we can construct a translation surface $(X,\omega)\in\Omega E_D(6)$ from Model A by imposing that the homology cycles $\alpha_{i,j}, \beta_{i,j}$, $1\leq i,j\leq 2$ indicated in Figure~\ref{f.1} have holonomy vectors
\begin{eqnarray*}
\omega(\alpha_{1,1})=\omega(\alpha_{1,2})=(\lambda/2,0), \quad \omega(\beta_{1,1})=\omega(\beta_{1,2})=(0,\lambda/2) \\
\omega(\alpha_{2,1})=\omega(\alpha_{2,2})=(w/2,0), \quad \omega(\beta_{2,1})=\omega(\beta_{2,2})=(t/2,h/2)
\end{eqnarray*}
where $\lambda:=(e+\sqrt{D})/2$.

Similarly, if $h+e<w<2(e+2h)$, we can construct a translation surface $(X,\omega)\in\Omega E_D(6)$ from Model B by imposing that the homology cycles $\alpha_{i,j}, \beta_{i,j}$, $1\leq i,j\leq 2$ indicated in Figure~\ref{f.2} have holonomy vectors
\begin{eqnarray*}
\omega(\alpha_{1,1})=\omega(\alpha_{1,2})=(\lambda/2,0), \quad \omega(\beta_{1,1})=\omega(\beta_{1,2})=(0,\lambda/2) \\
\omega(\alpha_{2,1})=\omega(\alpha_{2,2})=(w/2,0), \quad \omega(\beta_{2,1})=\omega(\beta_{2,2})=(t/2,h/2)
\end{eqnarray*}
where $\lambda:=(e+\sqrt{D})/2$.

\begin{remark} Formally, the area of the Model A and B translation surfaces described above are not necessarily $1$. So, we need to \emph{rescale} them in order to get an element $(X,\omega)\in\Omega E_D(6)$. However, since this scaling issue is irrelevant in our subsequent discussion of Lyapunov exponents, we will ignore it in what follows. 
\end{remark}

For later use, we will say that the \emph{short}, resp. \emph{long}, cylinders in Models A, B are the two cylinders at the very top and very bottom, resp. in the middle, of the figures above whose waist curves have length $\lambda/2$, resp. $w/2$.

After this brief introduction to the Prym eigenforms, let us discuss the Lyapunov exponents of the Teichm\"uller curves inside the Weierstrass loci $\Omega E_D(2g-2)$.

\subsection{Lyapunov spectrum of Teichm\"uller curves in Weierstrass loci} For $(X,\omega)\in\Omega E_D(2g-2)$, we can use the holomorphic involution $\rho: X\to X$ to decompose the fiber $H_1(X,\mathbb{R})$ of the Hodge bundle at $(X,\omega)$ as
$H_1(X,\mathbb{R}) = H_1^-(X,\mathbb{R})\oplus H_1^+(X,\mathbb{R})$
where $H_1^-(X,\mathbb{R})$ consists of $\rho$-anti-invariant homology cycles and $H_1^+(X,\mathbb{R})$ consists of $\rho$-invariant homology cycles. By definition, $H_1^-(X,\mathbb{R})$ has dimension $4$ (as $\Omega^-(X)$ has complex dimension $2$) and, \emph{a fortiori}, $H_1^+(X,\mathbb{R})$ has dimension $2g-4$. Also, since $\omega\in\Omega^-(X)$, one has that $H_1^-(X,\mathbb{R}) = H_1^{st}(X,\mathbb{R})\oplus H^-$, where $H_1^{st}(X,\mathbb{R})$ is the tautological subspace and $H^-$ is the orthogonal of $H_1^{st}(X,\mathbb{R})$ inside $H_1^-(X,\mathbb{R})$ with respect to the intersection form on homology. Since $H_1^-(X,\mathbb{R})$ has dimension $4$ and $H_1^{st}(X,\mathbb{R})$ has dimension $2$, it follows that $H^-$ has also dimension $2$.

It is not hard to check that the decomposition
$$H_1(X,\mathbb{R})=H_1^{st}(X,\mathbb{R})\oplus H^-\oplus H_1^+(X,\mathbb{R})$$
is $\SL$-equivariant decomposition of the Hodge bundle over the Teichm\"uller curve $\SL\cdot (X,\omega)$ into symplectic subbundles. In particular, the Kontsevich-Zorich cocycle over $\SL\cdot (X,\omega)$ preserves each symplectic subbundle of this decomposition. As usual, the restriction of the Kontsevich-Zorich cocycle to the tautological subbundle $H_1^{st}(X,\mathbb{R})$ has Lyapunov exponents $\pm1$, and, therefore, the interesting Lyapunov exponents come from the restrictions of the Kontsevich-Zorich cocycle to $H^-$ and $H_1^+(X,\mathbb{R})$. 

In the genus $2$ case, it was shown by M. Bainbridge~\cite{B} and A. Eskin, M. Kontsevich and A. Zorich~\cite{EKZ} that the restriction of the Kontsevich-Zorich cocycle to $H^-$ has Lyapunov exponents $\pm1/3$. Since $H_1^+(X,\mathbb{R})$ is trivial in genus $2$, we completely understand the Lyapunov spectra of Teichm\"uller curves in the  Weierstrass loci $\Omega E_D(2)$.

In the genus $3$ case, after the works of M. M\"oller~\cite{Mo} and A. Eskin, M. Kontsevich and A. Zorich~\cite{EKZ}, it is known that the restriction of the Kontsevich-Zorich cocycle to $H^-$, resp. $H_1^+(X,\mathbb{R})$, has Lyapunov exponents $\pm1/5$, resp. $\pm2/5$. In particular, the Lyapunov spectra of Teichm\"uller curves in the Weierstrass loci $\Omega E_D(4)$ are also completely determined.

In the genus $4$ case, after the works of M. M\"oller~\cite{Mo} and A. Eskin, M. Kontsevich and A. Zorich~\cite{EKZ}, it is known that the restriction of the Kontsevich-Zorich cocycle to $H^-$ has Lyapunov exponents $\pm1/7$. On the other hand, since $H_1^+(X,\mathbb{R})$ is a $4$-dimensional symplectic subspace, the restriction of the Kontsevich-Zorich cocycle to $H_1^+(X,\mathbb{R})$ has Lyapunov spectrum $\lambda_1^+\geq\lambda_2^+\geq-\lambda_2^+\geq-\lambda_1^+$. Here, the work~\cite{EKZ} can be used to show that $\lambda_1^+ + \lambda_2^+ = 6/7$, and the work of G. Forni (see Corollary 6.8 of~\cite{Fo11}) can be applied to show that $\lambda_2^+>0$. However, the individual values of the exponents $\lambda_1^+$ and $\lambda_2^+$ are unknown. In particular, it is natural to ask about the simplicity of the Lyapunov exponents $\lambda_1^+$ and $\lambda_2^+$: in our case, this amounts to deciding whether $\lambda_1^+>\lambda_2^+$, that is, the simplicity of the ``top exponent'' $\lambda_1^+$ of the ``block'' $H_1^+(X,\mathbb{R})$. In fact, the simplicity of $\lambda_1^+$ is supported by numerical experiments with some examples (indicating that $\lambda_1^+\approx 0.58...$ and $\lambda_2^+\approx 0.26...$), and, in this direction, we have the following result:

\begin{theorem}\label{t.Prym} Consider the Teichm\"uller curves given by the $\SL$-orbits of Model B translation surfaces $(X,\omega)\in\Omega E_D(6)$ with parameters $t=0$, $e=1$, $h=(w-1)/3\in\mathbb{N}$. Then, for all but (possibly) finitely many values of the parameter $h\in\mathbb{N}$, one has $\lambda_1^+>\lambda_2^+>0$ (i.e., the Lyapunov spectrum of the Kontsevich-Zorich cocycle restricted to the block $H_1^+(X,\mathbb{Z})$ is simple).
\end{theorem}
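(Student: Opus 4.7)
The strategy is to apply Theorem~\ref{t.Lyapunov}(c) to the symplectic block $H_1^+(X,\mathbb{R})$ of dimension $2r=4$. Combined with Forni's positivity $\lambda_2^+>0$ recalled above, this will yield the desired $\lambda_1^+>\lambda_2^+>0$. It suffices therefore to exhibit a pinching matrix $P$ and a twisting matrix $T$ in the monoid $\mathcal{G}_+$ obtained by restricting to $H_1^+$ the action on homology of $\textrm{Aff}(X,\omega)$.

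\textbf{Affine generators and matrices.} In Model~B, both the horizontal and vertical directions are periodic, each decomposing $(X,\omega)$ into four cylinders (two short, two long). Because $(X,\omega)$ is a Veech surface, the corresponding full multi-twists $\phi_h$ and $\phi_v$---obtained by choosing the integer powers of Dehn twists along the individual cylinders that equalize the resulting shears---lie in $\textrm{Aff}(X,\omega)$ and have parabolic derivatives in $\textrm{SL}(X,\omega)$. The Prym involution $\rho$ interchanges the cycles $\alpha_{i,1}\leftrightarrow \alpha_{i,2}$ and $\beta_{i,1}\leftrightarrow\beta_{i,2}$ of Figure~\ref{f.2}, so that the four $\rho$-symmetric classes
\[
\tilde\alpha_i:=\alpha_{i,1}+\alpha_{i,2},\qquad \tilde\beta_i:=\beta_{i,1}+\beta_{i,2},\qquad i=1,2,
\]
form a basis of $H_1^+(X,\mathbb{R})$. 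Applying the Picard--Lefschetz formula $\gamma\mapsto \gamma+\langle \gamma,c\rangle c$ cylinder by cylinder with the integer powers just described yields explicit $4\times 4$ symplectic matrices $M_h(h),M_v(h)\in\textrm{Sp}(4,\mathbb{Z})$ representing $\phi_h$ and $\phi_v$ on $H_1^+$, whose entries are polynomials in the integer parameter $h$ (recall $w=3h+1$, $t=0$, $e=1$).

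\textbf{Pinching and twisting as Zariski-open conditions in $h$.} Form candidates $P(h):=M_h(h)^a M_v(h)^b$ and $T(h):=M_h(h)^c M_v(h)^d$, with small positive integers $a,b,c,d$ chosen so that $P(h)$ mixes the short and long blocks. Pinching of $P(h)$ amounts to the non-vanishing of the discriminant of the degree-$2$ polynomial $Q(Y,h)$ obtained from the reciprocal characteristic polynomial of $P(h)$ via $Y=X+X^{-1}$, together with sign conditions on the roots of $Q(Y,h)$ (ensuring four real eigenvalues of pairwise distinct absolute values). Twisting of $T(h)$ relative to $P(h)$ translates, via appropriate resultants in the entries of $P(h)$ and $T(h)$, into the non-vanishing of finitely many polynomials $R_1(h),\dots,R_N(h)\in\mathbb{Z}[h]$. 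Each condition is the complement of the zero set of a single polynomial in $h$; non-triviality is certified by one symbolic evaluation at a convenient value of $h$. Consequently, the set of integers $h$ for which either condition fails is finite, and Theorem~\ref{t.Lyapunov}(c) yields simplicity of the restricted spectrum outside this exceptional set.

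The main obstacle is the explicit bookkeeping in the second step: carefully tracking the intersection pairings between the eight saddle-connection cycles of Model~B and the Prym-symmetric basis $\{\tilde\alpha_i,\tilde\beta_i\}$ so as to write down $M_h(h)$ and $M_v(h)$ explicitly, and in particular verifying that $\mathcal{G}_+$ does not preserve any proper non-trivial subspace of $H_1^+$ (without which neither pinching-plus-twisting, nor strong irreducibility, could hold; this is the substantive geometric content that distinguishes Model~B from a trivially block-reducible picture). Once these matrices are in hand, the algebraic verification of pinching and twisting reduces to a routine computation with polynomial $h$-dependence, amenable to computer algebra.
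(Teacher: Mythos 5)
Your overall strategy --- feed explicit homology matrices of affine multi-twists of the Model~B surface into Theorem~\ref{t.Lyapunov}(c) and treat pinching/twisting as a condition on the integer parameter that can fail only finitely often --- is the paper's strategy. But two of your steps have genuine gaps. The first concerns the choice of generators: you use only the horizontal and vertical multi-twists. The paper computes a \emph{third} matrix $C$ coming from a cylinder decomposition in the diagonal direction of slope $(\lambda+h)/\lambda$ (this is precisely where the hypothesis $h=(w-e)/3$ is used: it makes that direction periodic with a Model~A decomposition), and then applies its criterion to the pair $M=A\cdot B$, $N=B\cdot C$. This is not a cosmetic difference: the horizontal and vertical matrices $A,B$ are unipotent and block-triangular, preserving the transverse Lagrangians $\langle\widetilde\alpha_1,\widetilde\alpha_2\rangle$ and $\langle\widetilde\beta_1,\widetilde\beta_2\rangle$ respectively, and it is not at all clear --- and nowhere verified in your proposal --- that words in these two alone generate a twisting (or even strongly irreducible) monoid on $H_1^+$; the diagonal twist is what supplies the needed mixing between the two blocks. (A minor but symptomatic slip: in the paper's conventions the Prym-invariant space $H_1^+$ is spanned by the \emph{differences} $\alpha_{i,1}-\alpha_{i,2}$, $\beta_{i,1}-\beta_{i,2}$, not the sums, which span $H_1^-$.)

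The second gap is the more serious one: your finiteness argument does not work as stated. You claim each failure mode is the vanishing of a polynomial $R_k(h)\in\mathbb{Z}[h]$, so that a single symbolic evaluation certifies a finite exceptional set. But the verifiable criterion here (Theorem~\ref{t.MMY}, from~\cite{MMY}) requires that the discriminants $\Delta_i(P)$, $\Delta_j(Q)$ and all products $\Delta_i(P)\Delta_j(Q)$ be positive integers that are \emph{not perfect squares}. ``$R(h)$ is a perfect square'' is not the zero set of a polynomial in $h$, and a priori it can hold for infinitely many integers $h$. The paper closes this by computing the square-free reductions $R^{red}$, checking that they are irreducible over $\mathbb{Q}[x]$ of degree at least $3$ and pairwise without common factors, and then invoking Siegel's theorem on integral points of the hyperelliptic curves $z^2=R^{red}(h)$ to conclude that each $R(h)$ is a non-square for all but finitely many $h\in\mathbb{N}$. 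This arithmetic input (Siegel's theorem, plus the irreducibility and degree bookkeeping that makes it applicable) is the crux of the ``all but finitely many values of $h$'' conclusion, and evaluating the polynomials at one convenient $h$ cannot substitute for it. If instead you intended to verify twisting directly through eigenvectors and resultants, those quantities are algebraic, not polynomial, functions of $h$, and you would still owe an argument that the relevant non-vanishing persists along the integer points of the parameter curve.
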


\begin{remark} Note that the discriminant $D=e^2+4wh$ is $D=1+4(3h+1)h$ for the choice of parameters in Theorem~\ref{t.Prym}. In particular, by taking the reduction modulo $3$, we see that $D$ is not a square when $h\equiv 1$ (mod $3$). In other words, the statement of Theorem \ref{t.Prym} includes both non-arithmetic and arithmetic Teichm\"uller curves.
\end{remark}

\begin{remark} Roughly, Theorem~\ref{t.Prym} covers
    about a third of the known Teichm\"uller curves in $\Omega E_D(6)$.
It is likely that this simplicity result holds for all Teichm\"uller
curves in \emph{all} Weierstrass loci $\Omega E_D(6)$, but we will not
try to push our methods to get a more complete result 
because this is not the
main point of this note.
Indeed, the idea behind Theorem~\ref{t.Prym}
is to give an  example where Theorem~\ref{t.Lyapunov} can be applied,
while  the alternative approach via coding of the geodesic flow is
runs into serious difficulties since  for Teichm\"uller curves in
\emph{non-arithmetic} (i.e., $\sqrt{D}\notin\mathbb{N}$) Weierstrass
loci $\Omega E_D(6)$ the Veech group is unknown.
\end{remark}

The proof of Theorem~\ref{t.Prym} will occupy the rest of this section and it will consist of the following two steps. 

In the next subsection, we will start with a certain Model B translation surface $X=(M,\omega)\in\Omega E_D(6)$ and we will compute the action on homology of three parabolic elements of the Veech group $SL(M,\omega)$ related to two Model B cylinder decompositions and a Model A cylinder decomposition. The outcome of this calculation will be the following proposition:

\begin{proposition}\label{p.Prym-matrices}Let $X=(M,\omega)\in\Omega E_D(6)$ be a Model B translation surface with parameters $t=0$, $h, e\in\mathbb{N}$, $w=3h+e$. Then, the image of the monodromy representation $\rho:SL(M,\omega)\to H_1^+(X,\mathbb{R})$ contains the matrices 
\begin{equation*}
A=\left(\begin{array}{cccc}1 & 0 & w & 0 \\ 0 & 1 & 0 & (w-e)/3 \\ 0 & 0 & 1 & 0 \\ 0 & 0 & 0 & 1\end{array}\right), \quad
B=\left(\begin{array}{cccc}1 & 0 & 0 & 0 \\ 0 & 1 & 0 & 0 \\ 4w^2 & \frac{4w(w+2e)}{3} & 1 & 0 \\ \frac{4w(w+2e)}{3} & \frac{4w(w+2e)}{3} & 0 & 1\end{array}\right),
\end{equation*}
and
\begin{equation*}
C=\left(\begin{array}{cccc}1-\rho_{short} & -\rho_{short} & \rho_{short}+\rho_{long} & -\rho_{long} \\ 0 & 1 & -\rho_{long} &\rho_{long} \\ -\rho_{short} & -\rho_{short} & 1+\rho_{short} & 0 \\ -\rho_{short} & -\rho_{short} & \rho_{short} & 1\end{array}\right)
\end{equation*}
where 
$$\rho_{short}:=2(w+2e)^2(37w^3-69ew^2+45e^2w-13e^3)$$
and 
$$\rho_{long}:=(w-e)^2(37w^3+42e w^2-51e^2w+26e^3).$$
\end{proposition}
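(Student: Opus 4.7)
The plan is to exhibit three parabolic elements $\gamma_A,\gamma_B,\gamma_C$ of the Veech group $\textrm{SL}(M,\omega)$, each obtained as a product of Dehn twists along the core curves of a completely periodic direction on $X$, and then to compute their action on $H_1^+(X,\mathbb{R})$ in an explicit basis adapted to the Prym involution.

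The preliminary step is to fix such a basis. Because $\omega\in\Omega^-(X)$ is $\rho^*$-anti-invariant while the twin cycles $\alpha_{i,1},\alpha_{i,2}$ (resp.\ $\beta_{i,1},\beta_{i,2}$) share the same $\omega$-period, the Prym involution $\rho_*$ interchanges each twin pair up to a sign, so the four differences
\begin{equation*}
e_1=\alpha_{1,1}-\alpha_{1,2},\ e_2=\alpha_{2,1}-\alpha_{2,2},\ e_3=\beta_{1,1}-\beta_{1,2},\ e_4=\beta_{2,1}-\beta_{2,2}
\end{equation*}
form a basis of $H_1^+(X,\mathbb{R})$ (as one checks from the intersection pairing). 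The matrices $A$, $B$, $C$ of the proposition are to be read in this ordered basis.

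The main step is then to locate three completely periodic directions on $X$. The horizontal direction of Figure~\ref{f.2} gives the first: its cylinder decomposition consists of two short cylinders (cores of length $\lambda/2$) and two long cylinders (cores of length $w/2$), and the relations $t=0$, $w=3h+e$ make their moduli commensurable, so a suitable product of $n_i$-fold Dehn twists along the cores defines a horizontal parabolic $\gamma_A\in \textrm{SL}(M,\omega)$ by the Thurston--Veech construction. Its action on homology is described by the classical formula $\delta\mapsto \delta + n_i(\delta\cdot c_i)c_i$ for each core $c_i$ and each transversal cycle $\delta$; projecting to $H_1^+$ in the basis $(e_1,\dots,e_4)$ should produce the upper-triangular matrix $A$. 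The vertical direction of the same figure (truly vertical when $t=0$) produces the second parabolic $\gamma_B$, which fixes the $\beta$-cycles and shears the $\alpha$-cycles, accounting for the lower block-triangular shape of $B$. The third parabolic $\gamma_C$ comes from a periodic direction in which the same surface $X$ decomposes according to the \emph{Model A} pattern of Figure~\ref{f.1}; such a direction is produced by a butterfly move in the sense of Lanneau--Nguyen~\cite{LN} adapted to the parameters $t=0$, $e=1$, $w=3h+1$.

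The hard part will be this third direction. One must identify its precise slope on the given Model B surface, compute the core lengths, heights and moduli of the resulting short and long cylinders in the rotated frame, determine the integer twists $n_s, n_l$ that make the Dehn-twist product parabolic (using commensurability of the Model A moduli), and finally express the cores and transversal cycles in that frame back in the fixed basis $(e_1,\dots,e_4)$. This change of basis, combined with the combinatorics of the butterfly move, is what introduces the cubic polynomials $\rho_{\textrm{short}}$ and $\rho_{\textrm{long}}$; these expressions should emerge after simplification using $w=3h+e$. The analogous computations for the first two directions are much shorter because all the cycles involved are visible in Figure~\ref{f.2}, and a direct bookkeeping of the Dehn-twist action on $\alpha_{i,j},\beta_{i,j}$ should produce the entries $w$, $(w-e)/3$ of $A$ and $4w^2$, $4w(w+2e)/3$ of $B$.
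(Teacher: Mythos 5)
Your overall route is the same as the paper's: fix the basis $\widetilde{\alpha}_i=\alpha_{i,1}-\alpha_{i,2}$, $\widetilde{\beta}_i=\beta_{i,1}-\beta_{i,2}$ of $H_1^+(X,\mathbb{Z})$, take the horizontal and vertical parabolics of the Model B picture for $A$ and $B$, and obtain $C$ from a third completely periodic direction in which the surface decomposes as a Model A, computing each action by the standard Dehn-twist formula. Two small inaccuracies aside (the commensurability of the horizontal moduli, which are $1$ and $w/h$, holds for any Model B surface with $t=0$ and does not use $w=3h+e$; and the proposition is stated for general $e$ with $w=3h+e$, not only $e=1$), the plan is sound.

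The genuine gap is that the third direction is never actually produced, and this is where all the content of the formulas for $\rho_{\textrm{short}}$ and $\rho_{\textrm{long}}$ lives. The paper does not find this direction by a butterfly move; it simply takes the explicit slope $\theta=(\lambda+h)/\lambda$ and observes that the hypothesis $3h=w-e$ forces $\theta\cdot(w/2-\lambda/2)=h/2$, which is exactly what makes the cylinder decomposition in direction $\theta$ a Model A (Figure~\ref{f.3}) with identifiable core classes $\alpha_{1,j}^{diag}=\alpha_{1,j}+\beta_{1,j}+\beta_{2,j}$ and $\alpha_{2,j}^{diag}$. Without pinning down this slope you cannot compute the two moduli $\mu_{short}^{diag}$ and $\mu_{long}^{diag}$, and the polynomials $\rho_{\textrm{short}}$, $\rho_{\textrm{long}}$ are precisely the (coprime-in-spirit) integers with $\rho_{\textrm{long}}\mu_{long}^{diag}=\rho_{\textrm{short}}\mu_{short}^{diag}$, obtained from the identity
$$\frac{\mu_{long}^{diag}}{\mu_{short}^{diag}}=\frac{2(w+2e)^2(37w^3-69ew^2+45e^2w-13e^3)}{(w-e)^2(37w^3+42e w^2-51e^2w+26e^3)}$$
after simplification with $2\lambda=e+\sqrt{D}$ and $D=e^2+4wh$. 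They are the twist multiplicities in the short and long cylinders (your $n_s$, $n_l$), not an artifact of a change of basis or of butterfly-move combinatorics. So the proposal correctly anticipates the structure of the argument but defers exactly the step that determines the matrix $C$; as written it verifies $A$ and $B$ in outline only and does not establish the displayed form of $C$.
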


Then, in the final subsection, our next step is to prove (with the aid of some results in~\cite{MMY}) the following proposition:
\begin{proposition}\label{p.Prym-monoid} Consider the matrices $A$, $B$ and $C$ from Proposition~\ref{p.Prym-matrices} above in the case of parameters $t=0$, $e=1$, $h\in\mathbb{N}$ and $w=3h+1$. Then, any monoid $\mathcal{G}$ containing $A$, $B$ and $C$ is pinching and twisting for all $h\in\mathbb{N}$ large enough.
\end{proposition}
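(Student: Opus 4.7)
The plan is to apply the standard pinching-and-twisting template: exhibit an element $P \in \mathcal{G}$ with four distinct real eigenvalues (pinching) and then use $C$ to verify the twisting condition with respect to $P$. Because $e = 1$ and $w = 3h+1$, the entries of $A$, $B$, $C$ are all polynomials in the single parameter $h$, so pinching and twisting become non-vanishing conditions on finitely many polynomials in $h$; it suffices to show that each such polynomial is not identically zero, since then its zero set contributes only finitely many exceptional values of $h$.

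For the pinching step I will take $P := AB$ (or, should its discriminant turn out to vanish, an adjacent product such as $A^kB$). Since $P$ is symplectic,
$$\chi_P(\lambda) = \lambda^4 - a_1(h)\lambda^3 + a_2(h)\lambda^2 - a_1(h)\lambda + 1,$$
and the substitution $\mu = \lambda + \lambda^{-1}$ reduces the spectrum to the roots of $\mu^2 - a_1(h)\mu + (a_2(h)-2)$. Pinching is equivalent to this quadratic having two distinct real roots $\mu_\pm$ both lying outside $[-2,2]$. A direct computation from the explicit form of $P$ presents $a_1(h)$ and $a_2(h)$ as polynomials in $h$ with easily-computable leading coefficients; the discriminant $a_1^2 - 4(a_2-2)$ and the quantities $|\mu_\pm|-2$ are then polynomials in $h$ whose leading-order behaviour I will check to be positive, so pinching holds outside a finite exceptional set.

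Next I will describe the $P$-invariant isotropic and co-isotropic subspaces. Under pinching, label the eigenvalues by $\{\lambda_1,\lambda_1^{-1},\lambda_2,\lambda_2^{-1}\}$ with corresponding eigenvectors $u_1,u_1',u_2,u_2'$. Symplecticity forces the only non-vanishing pairings among eigenvectors to be $\omega(u_j,u_j')$. Consequently every $1$-dimensional $P$-invariant subspace is automatically isotropic (and is one of the four eigenlines), the $P$-invariant Lagrangian planes are the four spans of two eigenvectors drawn from different reciprocal pairs, and every $P$-invariant $3$-dimensional subspace is automatically co-isotropic. The twisting condition with witness $C$ then reduces to finitely many conditions $C(F) \cap F' = \{0\}$, each equivalent to the non-vanishing of an explicit determinantal polynomial in $h$ formed from the entries of $C$ and coordinates of the $u_i,u_i'$.

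The main obstacle is verifying that none of these twisting polynomials is identically zero. My intended approach is asymptotic: the entries of $C$ carry the factors $\rho_{short}, \rho_{long}$, which are polynomials in $h$ of degree $5$, while the coordinates of the $P$-eigenvectors can be extracted from the leading-order structure of $P$ as $h \to \infty$ (for instance via the natural block decomposition of its entries). Computing the dominant monomial of each twisting determinant and checking that it is nonzero gives, for each pair $(F,F')$, a finite exceptional set in $h$; the union of these sets together with the pinching exceptional set is the finite set excluded in the proposition. Alternatively, one may bypass the case analysis by genericity: since pinching and twisting are Zariski-open conditions on the matrix entries, it suffices to exhibit a single integer $h$ at which both hold, after which polynomiality in $h$ automatically restricts the failure set to a finite subset of $\mathbb{N}$.
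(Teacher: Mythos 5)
Your outline of the direct verification is structurally sound (the enumeration of the $P$-invariant isotropic and co-isotropic subspaces for a pinching symplectic matrix in dimension $4$ is correct, giving $16+16$ transversality conditions), but the proof has a genuine gap: the one step you yourself identify as ``the main obstacle'' --- showing that none of the twisting determinants vanishes identically in $h$ --- is never carried out. You describe an ``intended approach'' via dominant monomials and, alternatively, a genericity argument requiring ``a single integer $h$ at which both hold,'' but you compute no dominant monomial and exhibit no such $h$. Without one or the other, nothing rules out the scenario in which $C$ respects some invariant-subspace configuration of $AB$ for \emph{every} $h$, and the proposition is not established. A secondary issue: the twisting determinants are not polynomials in $h$ but polynomials in $h$ and in the eigenvector coordinates of $AB$, which are only algebraic over $\mathbb{Q}(h)$; to run the ``not identically zero $\Rightarrow$ finitely many zeros'' argument you must first symmetrize (e.g., take the product of all $32$ determinants, which is Galois-stable and hence an honest rational function of $h$). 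This is fixable but is missing as written. Finally, in the pinching step, two distinct real roots $\mu_\pm$ with $|\mu_\pm|>2$ do not by themselves exclude $|\lambda_1|=|\lambda_2|$ (which occurs when $\mu_+=-\mu_-$); one must also check $a_1(h)\neq 0$.

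The paper avoids all of this by a different route: it invokes the Galois-theoretic criterion of Matheus--M\"oller--Yoccoz (Theorem~\ref{t.MMY}), applied to the two products $M=A\cdot B$ and $N=B\cdot C$. That criterion reduces pinching and twisting to the statement that the explicit discriminants $\Delta_1,\Delta_2,\Delta_3$ of the characteristic polynomials of $M$ and $N$, and their pairwise products, are positive non-squares --- conditions involving only characteristic polynomials, never eigenvectors. The ``all but finitely many $h$'' clause is then obtained by checking that the square-free reductions of these discriminants are irreducible of degree $\geq 3$ in $w$ (or $h$) and applying Siegel's theorem on integral points of $z^2=R^{\mathrm{red}}(x)$. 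If you wish to salvage your direct approach, the minimal repair is to actually exhibit one integer $h$ at which $AB$ is pinching and $C$ is twisting with respect to it (a finite computation), and to phrase the finiteness argument in terms of the symmetrized product of the twisting determinants; but as submitted, the argument is a plan rather than a proof.
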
 

At this stage, we will conclude the proof of Theorem~\ref{t.Prym} by simply combining Propositions~\ref{p.Prym-matrices} and~\ref{p.Prym-monoid} with item (c) of the ``coding-free'' simplicity criterion in Theorem~\ref{t.Lyapunov}.

\subsection{Action on homology of some Dehn twists of $(M,\omega)\in\Omega E_D(6)$}

Using the notations from Appendix D of E. Lanneau and D.-M. Nguyen's paper \cite{LN}, let $X=(M,\omega)\in\Omega E_D(6)$ be a Model B translation surface with $t=0$ (i.e., no ``twist'' between cylinders).

We consider the basis of homology $\{\alpha_{i,j}, \beta_{i,j}: 1\leq i,j\leq 2\}$ of $X=(M,\omega)$ from Figure~\ref{f.2} above, so that
$$\alpha_i:=\alpha_{i,1}+\alpha_{i,2}, \quad \beta_i:=\beta_{i,1}+\beta_{i,2}, \quad i=1,2$$
is a basis of $H_1^{-}(X,\mathbb{Z})$, and
$$\widetilde{\alpha}_i:=\alpha_{i,1}-\alpha_{i,2}, \quad \widetilde{\beta}_i:=\beta_{i,1}-\beta_{i,2}, \quad i=1,2$$
is a basis of $H_1^+(X,\mathbb{Z})$.

As we already explained, for our purposes (of showing Theorem~\ref{t.Prym}), we will focus mostly on the action of $\textrm{Aff}(X,\omega)$ on $H_1^+(X,\mathbb{Z})$.

\begin{remark} It is not hard to see that translation surfaces in \emph{minimal strata} have a trivial automorphism groups. In particular, from the discussion of \S\ref{s.Lyapunov}, we have that the Veech group of any $(X,\omega)\in\Omega E_D(6)$ injects into its group of affine diffeomorphisms. In other words, in the case of $(X,\omega)\in\Omega E_D(6)$, we can study the Kontsevich-Zorich cocycle by analyzing how the affine diffeomorphisms naturally associated to elements of $SL(X,\omega)$ act on $H_1(X,\mathbb{R})$.
\end{remark}

Recall that, in Model B (with $t=0$), the holonomy vectors of $\alpha_{i,j}, \beta_{i,j}$ are
\begin{eqnarray*}
\omega(\alpha_{1,1})=\omega(\alpha_{1,2})=(\lambda/2,0), \omega(\beta_{1,1})=\omega(\beta_{1,2})=(0,\lambda/2) \\
\omega(\alpha_{2,1})=\omega(\alpha_{2,2})=(w/2,0), \omega(\beta_{2,1})=\omega(\beta_{2,2})=(0,h/2)
\end{eqnarray*}

Firstly, we look in the horizontal direction and we consider the parabolic element $A:=\left(\begin{array}{cc}1 & w \\ 0 & 1\end{array}\right)$. The moduli of the horizontal cylinders of $X$ are $1$ and $w/h$. It follows that $A$ belongs to $\textrm{SL}(X,\omega)$ and it acts on homology as
$$A(\alpha_{i,j})=\alpha_{i,j}$$
and
$$A(\beta_{1,j})=w\alpha_{1,j} + \beta_{1,j}, \quad A(\beta_{2,j})=h\alpha_{2,j} + \beta_{2,j}$$
In particular, the matrix of $A$ on the basis $\{\widetilde{\alpha}_1, \widetilde{\alpha}_2, \widetilde{\beta}_1, \widetilde{\beta}_2\}$ of $H_1^+(X,\mathbb{Z})$ is
$$A=\left(\begin{array}{cccc}1 & 0 & w & 0 \\ 0 & 1 & 0 & h \\ 0 & 0 & 1 & 0 \\ 0 & 0 & 0 & 1\end{array}\right) = \left(\begin{array}{cc}\textrm{Id}_{2\times 2} & \widetilde{A} \\ 0 & \textrm{Id}_{2\times 2}\end{array}\right)$$
where $\textrm{Id}_{2\times 2}$ is the $2\times 2$ identity matrix and $\widetilde{A}=\left(\begin{array}{cc}w & 0 \\ 0 & h\end{array}\right)$.

Secondly, we look in the vertical direction and we consider the parabolic element $B:=\left(\begin{array}{cc}1 & 0 \\ \tau & 1\end{array}\right)$ where
$$\tau := 4(D-(w-e)^2)\frac{\lambda/2+h/2}{\lambda-w/2} = ((2w-e)^2-D)\frac{\lambda/2+h}{w/2-\lambda/2}$$ Here, we used the relations $2\lambda=e+\sqrt{D}$ and
$D=e^2+4wh$. Note that the moduli of the vertical cylinders are $\frac{\lambda/2+h/2}{\lambda-w/2}$ and $\frac{\lambda/2+h}{w/2-\lambda/2}$, so that $B$ belongs to $\textrm{SL}(X,\omega)$ and it acts on homology as
$$B(\beta_{i,j})=\beta_{i,j}$$
and
\begin{eqnarray*}
&B(\alpha_{1,j})= \alpha_{1,j}+ 4(D-(w-e)^2)\alpha_{1,j}^{vert} + ((2w-e)^2-D)\alpha_{2,j}^{vert}, \\
&B(\alpha_{2,j})= \alpha_{2,j}+ 4(D-(w-e)^2)\alpha_{1,j}^{vert}+ ((2w-e)^2-D)(\alpha_{2,1}^{vert} + \alpha_{2,2}^{vert})
\end{eqnarray*}
where $\alpha_{1,j}^{vert}:=\beta_{1,j}+\beta_{2,j}$ are the homology classes of short vertical cylinders and $\alpha_{2,j}^{vert}:=\beta_{1,j}+\beta_{2,1}+\beta_{2,2}$ are the homology classes of long vertical cylinders. In particular, the matrix of $B$ on the basis $\{\widetilde{\alpha}_1, \widetilde{\alpha}_2, \widetilde{\beta}_1, \widetilde{\beta}_2\}$ of $H_1^+(X,\mathbb{Z})$ is
\begin{eqnarray*}
B&=&\left(\begin{array}{cccc}1 & 0 & 0 & 0 \\ 0 & 1 & 0 & 0 \\ 4(D-(w-e)^2) + ((2w-e)^2-D) & 4(D-(w-e)^2) & 1 & 0 \\ 4(D-(w-e)^2) & 4(D-(w-e)^2) & 0 & 1\end{array}\right) \\
&=& \left(\begin{array}{cc}\textrm{Id}_{2\times 2} & 0 \\ \widetilde{B} & \textrm{Id}_{2\times 2}\end{array}\right)
\end{eqnarray*}
where $\textrm{Id}_{2\times 2}$ is the $2\times 2$ identity matrix and
\begin{eqnarray*}
\widetilde{B}&=&\left(\begin{array}{cc}4(D-(w-e)^2) + ((2w-e)^2-D) & 4(D-(w-e)^2) \\ 4(D-(w-e)^2) & 4(D-(w-e)^2)\end{array}\right) \\ &=&
\left(\begin{array}{cc}4w(3h+e) & 4w(4h-w+2e) \\ 4w(4h-w+2e) & 4w(4h-w+2e)\end{array}\right).
\end{eqnarray*}
Here, we used the relation $D=e^2+4wh$.

Finally, we consider the ``diagonal'' direction of slope $\theta:=(\lambda+h)/\lambda$. In general, the cylinder decomposition in this direction is given by a Model A.

For sake of simplicity, we will take $3h=w-e$, so that $\theta\cdot (w/2-\lambda/2)=h/2$ and thus the cylinder decomposition is the one presented in Figure~\ref{f.3} below.

\begin{figure}[htb!]
\includegraphics{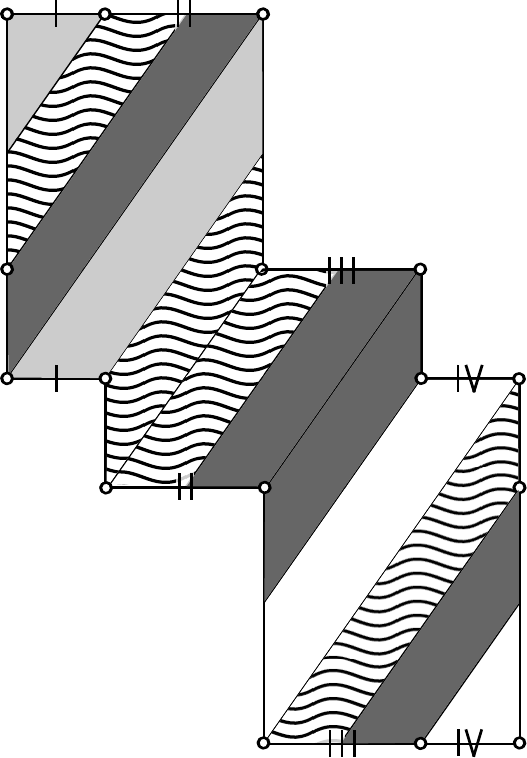}
\caption{Cylinder decomposition in direction $\theta$.}\label{f.3}
\end{figure}

The homology classes of short cylinders in ``diagonal'' direction $\theta$ are
$$\alpha_{1,j}^{diag}:=\alpha_{1,j}+\beta_{1,j}+\beta_{2,j},$$
and the homology classes of long cylinders in ``diagonal'' direction $\theta$ are
$$\alpha_{2,j}^{diag}:=\alpha_{1,1}^{diag}+\alpha_{1,2}^{diag}+\alpha_{2,j}-\alpha_{1,j}+\beta_{2,1}+\beta_{2,2}$$

Also, a short cylinder in direction $\theta$ has modulus
$$\mu_{short}^{diag}:=\frac{\left(\frac{\lambda}{2}\right)^2 + \left(\frac{\lambda}{2}+\frac{h}{2}\right)^2}{\left(\lambda-\frac{w}{2}\right)\left(\frac{\lambda}{2}+\frac{h}{2}\right)}$$
and a long cylinder in direction $\theta$ has modulus
$$\mu_{long}^{diag}:=2\frac{\left(\frac{w}{2}+\frac{\lambda}{2}\right)^2 + \left(\lambda+2h\right)^2}{\left(\frac{w}{2}-\frac{\lambda}{2}\right)\left(\lambda+2h\right)}$$

A direct computation using the relations $3h=w-e$, $D=e^2+4wh = e^2+4w(w-e)/3$ and $2\lambda=e+\sqrt{D}$ reveals that
$$\frac{\mu_{long}^{diag}}{\mu_{short}^{diag}}=\frac{2(w+2e)^2(37w^3-69ew^2+45e^2w-13e^3)}{(w-e)^2(37w^3+42e w^2-51e^2w+26e^3)}$$
This motivates the consideration of the action on homology of the element $C$ of $\textrm{SL}(X,\omega)$ given by the Dehn twist in direction $\theta$ by
$$\rho:=\rho_{long}\cdot\mu_{long}^{diag} = \rho_{short}\cdot\mu_{short}^{diag}$$
where
$$\rho_{short}:=2(w+2e)^2(37w^3-69ew^2+45e^2w-13e^3)$$
and 
$$\rho_{long}:=(w-e)^2(37w^3+42e w^2-51e^2w+26e^3).$$

By inspecting how the cylinders in direction $\theta$ intersect the cycles $\alpha_{i,j}, \beta_{i,j}$, we get that
$$C(\alpha_{1,j})=\alpha_{1,j} - \rho_{short}\alpha_{1,j}^{diag} - \rho_{long}(\alpha_{2,1}^{diag}+\alpha_{2,2}^{diag}),$$ 
$$C(\beta_{1,1})=\beta_{1,1} + \rho_{short}\alpha_{1,1}^{diag}+\rho_{long}\alpha_{2,2}^{diag},$$ 
$$C(\beta_{1,2})=\beta_{1,2} + \rho_{short}\alpha_{1,2}^{diag}+\rho_{long}\alpha_{2,1}^{diag},$$
$$C(\alpha_{2,j})=\alpha_{2,j} - \rho_{short}\alpha_{1,j}^{diag} - 2\rho_{long}(\alpha_{2,1}^{diag}+\alpha_{2,2}^{diag}),$$ 
$$C(\beta_{2,j})=\beta_{2,j} + \rho_{long}\alpha_{2,j}^{diag}$$
Since $\alpha_{1,1}^{diag}-\alpha_{1,2}^{diag}=\widetilde{\alpha}_1 + \widetilde{\beta}_1 + \widetilde{\beta}_2$ and $\alpha_{2,1}^{diag}-\alpha_{2,2}^{diag}=\widetilde{\alpha}_2-\widetilde{\alpha}_1$, we obtain that the matrix of $C$ on the basis $\{\widetilde{\alpha}_1, \widetilde{\alpha}_2, \widetilde{\beta}_1, \widetilde{\beta}_2\}$ of $H_1^+(X,\mathbb{Z})$ is
$$C=\left(\begin{array}{cccc}1-\rho_{short} & -\rho_{short} & \rho_{short}+\rho_{long} & -\rho_{long} \\ 0 & 1 & -\rho_{long} &\rho_{long} \\ -\rho_{short} & -\rho_{short} & 1+\rho_{short} & 0 \\ -\rho_{short} & -\rho_{short} & \rho_{short} & 1\end{array}\right)$$

In resume, we started with a Model B with parameters $h=(w-e)/3$, $t=0$ and we got the matrices
\begin{equation}\label{e.A}
A=\left(\begin{array}{cccc}1 & 0 & w & 0 \\ 0 & 1 & 0 & (w-e)/3 \\ 0 & 0 & 1 & 0 \\ 0 & 0 & 0 & 1\end{array}\right) =: \left(\begin{array}{cc}\textrm{Id}_{2\times 2} & \widetilde{A} \\ 0 & \textrm{Id}_{2\times 2}\end{array}\right),
\end{equation}
\begin{equation}\label{e.B}
B=\left(\begin{array}{cccc}1 & 0 & 0 & 0 \\ 0 & 1 & 0 & 0 \\ 4w^2 & \frac{4w(w+2e)}{3} & 1 & 0 \\ \frac{4w(w+2e)}{3} & \frac{4w(w+2e)}{3} & 0 & 1\end{array}\right)=: \left(\begin{array}{cc}\textrm{Id}_{2\times 2} & 0 \\ \widetilde{B} & \textrm{Id}_{2\times 2}\end{array}\right),
\end{equation}
and
\begin{equation}\label{e.C}
C=\left(\begin{array}{cccc}1-\rho_{short} & -\rho_{short} & \rho_{short}+\rho_{long} & -\rho_{long} \\ 0 & 1 & -\rho_{long} &\rho_{long} \\ -\rho_{short} & -\rho_{short} & 1+\rho_{short} & 0 \\ -\rho_{short} & -\rho_{short} & \rho_{short} & 1\end{array}\right)
\end{equation}
corresponding to the action of three elements of $\textrm{SL}(X,\omega)$ (coming from certain Dehn twists in three periodic directions) on $H_1^+(X,\mathbb{Z})$. In other words, we proved Proposition~\ref{p.Prym-matrices}.

In the next subsection, we will use these matrices to complete the proof of Theorem~\ref{t.Prym}.

\subsection{End of the proof of Theorem~\ref{t.Prym}}

By Theorem~\ref{t.Lyapunov}, we can complete the proof of Theorem~\ref{t.Prym} by considering Model B with parameters $t=0$, $e=1$, $h=(w-1)/3\in\mathbb{N}$ and by showing the pinching and twisting properties for any monoid $\mathcal{G}$ containing the matrices $A$, $B$ and $C$ given by \eqref{e.A}, \eqref{e.B} and \eqref{e.C} for adequate values of the ``free'' parameter $w\in\mathbb{N}$ (or equivalently $h=(w-1)/3\in\mathbb{N}$). In this direction, we'll need the following ``Galois-theoretical'' criterion for the pinching and twisting properties from the article~\cite{MMY}:

\begin{theorem}\label{t.MMY}Let $\mathcal{G}$ be a monoid containing two matrices $M, N\in Sp(4,\mathbb{Z})$. Denote by $P(x)=x^4 + a(P) x^3 + b(P) x^2 + a(P) x +1$ and $Q(x)=x^4 + a(Q) x^3 + b(Q) x^2 + a(Q) x +1$ the characteristic polynomials of $M$ and $N$. Suppose that the discriminants 
\begin{itemize} 
\item $\Delta_1(P):= a(P)^2 - 4 (b(P)-2)$, $\Delta_2(P):=(b(P)+2)^2-  4 a(P)^2$, $\Delta_3(P):=\Delta_1(P)\cdot \Delta_2(P)$,
\item $\Delta_1(Q):= a(Q)^2 - 4 (b(Q)-2)$, $\Delta_2(Q):=(b(Q)+2)^2-  4 a(Q)^2$, $\Delta_3(Q):=\Delta_1(Q)\cdot \Delta_2(Q)$, and 
\item $\Delta_i(P)\cdot\Delta_j(Q)$, $1\leq i,j\leq 3$ 
\end{itemize}
are positive integers that are not squares. Then, the matrices $M$ and $N$ are pinching and some product of powers of $M$ and $N$ is twisting with respect to $M$, and, \emph{a fortiori}, the monoid $\mathcal{G}$ has the pinching and twisting properties.
\end{theorem}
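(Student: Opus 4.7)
The plan is to convert the arithmetic hypotheses on the discriminants into Galois-theoretic statements about the splitting fields of $P$ and $Q$, and then read off both pinching and twisting from those statements. The starting point is that $P(x) = x^4 + a x^3 + b x^2 + a x + 1$ is palindromic, so the substitution $y = x + x^{-1}$ reduces $P$ to the resolvent $R_P(y) = y^2 + a y + (b-2)$. The discriminant of $R_P$ is exactly $\Delta_1(P)$, so the two ``half-traces'' $\mu_{1,2} = (-a \pm \sqrt{\Delta_1(P)})/2$ are the roots of $R_P$ and the four eigenvalues of $M$ are $\lambda_i^{\pm} = (\mu_i \pm \sqrt{\mu_i^2 - 4})/2$. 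An elementary computation with the symmetric functions of $\mu_1,\mu_2$ gives the identity $(\mu_1^2 - 4)(\mu_2^2 - 4) = \Delta_2(P)$, so that $\sqrt{\Delta_1(P)}, \sqrt{\Delta_2(P)}, \sqrt{\Delta_3(P)}$ are the three quantities generating the three non-trivial quadratic subextensions of the splitting field $K_P$ of $P$.

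Pinching of $M$ follows from this analysis. Since $\Delta_1(P), \Delta_2(P), \Delta_3(P)$ are pairwise non-square positive integers, $K_P$ is a totally real biquadratic extension of $\mathbb{Q}$ of degree $4$ with Galois group $(\mathbb{Z}/2)^2$; in particular $P$ is irreducible, because a reducible palindromic quartic would force at least one $\Delta_i(P)$ to be a square. The positivity of $\Delta_1(P)$ gives that $\mu_1, \mu_2$ are real and distinct, and the positivity of $\Delta_2(P) = (\mu_1^2-4)(\mu_2^2-4)$ leaves two possibilities: either both $\mu_i^2 > 4$, in which case all four eigenvalues of $M$ are real, or both $\mu_i^2 < 4$, in which case all four eigenvalues lie on the unit circle. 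The latter case is excluded by Kronecker's theorem applied to the irreducible $P$: the eigenvalues would then be roots of unity, but each of the four irreducible palindromic quartic cyclotomic polynomials $\Phi_5, \Phi_8, \Phi_{10}, \Phi_{12}$ has at least one square $\Delta_i$, contradicting the hypothesis. Hence the four eigenvalues of $M$ are all real. Finally, $\mu_1^2 \neq \mu_2^2$: equality would force $\mu_1 = -\mu_2$, hence $a=0$ and $\Delta_2(P) = (b+2)^2$, once again a square. So $M$ has four real eigenvalues with four distinct absolute values, i.e.\ $M$ is pinching; the same argument applied to $Q$ gives that $N$ is pinching.

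The twisting property is the true obstacle, and this is where the cross-hypotheses on $\Delta_i(P)\cdot\Delta_j(Q)$ enter. These cross-conditions translate exactly into the linear disjointness of $K_P$ and $K_Q$ over $\mathbb{Q}$, i.e.\ $K_P \cap K_Q = \mathbb{Q}$ and $\mathrm{Gal}(K_P K_Q / \mathbb{Q}) = (\mathbb{Z}/2)^2 \times (\mathbb{Z}/2)^2$. Geometrically, the finitely many $M$-invariant isotropic and co-isotropic subspaces of $\mathbb{R}^4$ are all defined over $K_P$ (being spanned by subsets of the $M$-eigendirections), while the eigendirections of $N$ lie in $K_Q^4$; linear disjointness then forces the matrix of $N$ in an $M$-eigenbasis to have non-vanishing minors of all relevant shapes. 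Following~\cite{MMY}, the candidate twisting element is a product $T = M^{k_1} N M^{k_2}$: the pinching of $M$ makes $M^k$ dilate along four distinct exponential rates, so as $k_1, k_2$ grow in a suitable regime the image $T(F)$ of any fixed isotropic $F$ is driven into generic position. The non-intersection conditions $T(F) \cap F' = \{0\}$ then become a finite system of polynomial inequations in $(k_1,k_2)$ whose coefficients involve the aforementioned minors of $N$, and the Galois disjointness is precisely what prevents all these inequations from failing simultaneously. The most delicate book-keeping is for the Lagrangian pair $(F,F')$ of dimension $2$, and this is the heart of the argument in~\cite{MMY}, which I would invoke as a black box to conclude.
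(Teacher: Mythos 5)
First, a point of reference: the paper does not prove Theorem~\ref{t.MMY} at all --- it is quoted verbatim from \cite{MMY} and used as a black box --- so the only meaningful comparison is with the argument in that reference. Your treatment of pinching is correct and is essentially the argument of \cite{MMY}: the reduction of the reciprocal quartic to the resolvent $y^2+ay+(b-2)$ with discriminant $\Delta_1$, the identity $(\mu_1^2-4)(\mu_2^2-4)=\Delta_2$, the fact that reducibility of $P$ forces one of the $\Delta_i$ to be a square (a rational root gives $\mu=\pm2$ and makes $\Delta_1$ a square; a factorization into reciprocal quadratics does the same; a ``cross'' factorization pairing $\lambda_1$ with $\lambda_2$ makes $(\lambda_1-\lambda_1^{-1})(\lambda_2-\lambda_2^{-1})$ rational, hence $\Delta_2$ a square), Kronecker's theorem together with the explicit check of $\Phi_5,\Phi_8,\Phi_{10},\Phi_{12}$ to exclude the unit-circle alternative, and $a=0$ to exclude $|\mu_1|=|\mu_2|$. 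All of these steps check out. One small correction: the splitting field $K_P$ is generically a degree-$8$ dihedral extension, not a biquadratic field of degree $4$; what is true (and what you actually use) is that its three quadratic subfields are $\mathbb{Q}(\sqrt{\Delta_1})$, $\mathbb{Q}(\sqrt{\Delta_2})$ and $\mathbb{Q}(\sqrt{\Delta_3})$. This slip does not affect the pinching argument.

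The genuine gap is the twisting half. After translating the hypotheses on $\Delta_i(P)\Delta_j(Q)$ into disjointness of the quadratic subfields attached to $P$ and $Q$, you describe in qualitative terms why a product $M^{k_1}NM^{k_2}$ should twist --- non-vanishing of the minors of $N$ in an $M$-eigenbasis, images of invariant subspaces driven to generic position by the pinching of $M$ --- and then explicitly invoke ``the heart of the argument in \cite{MMY}'' as a black box. Since the statement under review \emph{is} that result of \cite{MMY}, this is circular as a self-contained proof: the entire content of the theorem beyond pinching is precisely the verification that Galois disjointness forces the relevant quantities to be non-zero (the delicate case being the Lagrangian one, where what must be shown non-zero is a sum of products of $2\times2$ minors rather than a single minor, and the argument proceeds by applying Galois conjugation to a putative vanishing relation), and that step is asserted rather than proved. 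To be fair, the paper itself does exactly the same thing --- it states the theorem and cites \cite{MMY} --- so your proposal is no less complete than the text you were matched against; but as a proof of Theorem~\ref{t.MMY} it is an outline whose decisive lemma is outsourced to the very reference the theorem comes from.
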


In our context, we consider the matrix $M=A\cdot B$ where $A$ and $B$ are the matrices presented in \eqref{e.A} and \eqref{e.B}. It has the form
$$M=\left(\begin{array}{cc}\textrm{Id}_{2\times 2}+\widetilde{A}\widetilde{B} & \widetilde{A} \\ \widetilde{B} & \textrm{Id}_{2\times 2}\end{array}\right)$$
and, therefore, its characteristic polynomial is
\begin{eqnarray*}
& &P(x):= x^4 + a(P) x^3 + b(P) x^2 + a(P) x +1 = \\
& &x^4-(\textrm{tr}(\widetilde{A}\widetilde{B})+4) x^3 + (\textrm{det}(\widetilde{A}\widetilde{B}) + 2\textrm{tr}(\widetilde{A}\widetilde{B})+6)x^2 - (\textrm{tr}(\widetilde{A}\widetilde{B})+4)x+1.
\end{eqnarray*}
In our case,
$$\widetilde{A}\widetilde{B}= \left(\begin{array}{cc} 4w^3 & 4 w^2(w+2)/3 \\ 4(w-1)w(w+2)/9 & 4(w-1)w(w+2)/9 \end{array}\right)$$
so that 
$$\textrm{tr}(\widetilde{A}\widetilde{B})=4w(10w^2+w-2)/9$$ 
and 
$$\textrm{det}(\widetilde{A}\widetilde{B})=32w^3(w^3-3w+2)/27.$$

We compute the following discriminants
\begin{eqnarray*}
\Delta_1(P) &:=& a(P)^2-4(b(P)-2) := \textrm{tr}(\widetilde{A}\widetilde{B})^2 - 4\,\textrm{det}(\widetilde{A}\widetilde{B}) \\
&=& \frac{16}{81}w^2(76w^4+20w^3+33w^2-52w+4) \\ 
&=& 16(3h+1)^2 (76 h^4 + 108 h^3 + 61 h^2 + 14 h +1)
\end{eqnarray*}
\begin{eqnarray*}
& &\Delta_2(P) := (b(P)+2)^2 - 4a(P)^2 = \\ && \frac{512}{729}(w-1)^2 w^3 (2w^7+4w^6-6w^5+22w^4+71w^3+15w+54)
 \end{eqnarray*}
and
\begin{equation*}
\Delta_3(P) := \Delta_1(P)\cdot \Delta_2(P)
\end{equation*}

Next, we consider the matrix $N=B\cdot C$ where $B$ and $C$ the matrices presented in \eqref{e.B} and \eqref{e.C}. Its characteristic polynomial is 
\begin{eqnarray*}
& &Q(x):= x^4 + a(Q) x^3 + b(Q) x^2 + a(Q) x +1
\end{eqnarray*}
where 
\begin{eqnarray*}
a(Q)= - \frac{1184 w^7}{3} - 448 w^6 + 
 840 w^5 + \frac{472 w^4}{3} - 296 w^3  + 72 w^2 + \frac{208 w}{3}  -4   
\end{eqnarray*}
and
\begin{eqnarray*}
b(Q) &=& \frac{87616 w^{14}}{9} + \frac{66304 w^{13}}{3} - 72704 w^{12} - \frac{994048 w^{11}}{9} + 295808 w^{10} \\ &+& 106752 w^9 - \frac{1864192 w^8}{3} + 
\frac{1076800 w^7}{3} 
 + 347072 w^6 - \frac{5648656 w^5}{9} \\ &+& 405360 w^4 - 132528 w^3 + \frac{171760 w^2}{9} - \frac{416 w}{3} + 6
\end{eqnarray*} 
Again, we compute the following discriminants:
\begin{eqnarray*}
\Delta_1(Q)&:=& a(Q)^2-4(b(Q)-2) \\ &=& \frac{64}{3} ((w-1) w (w+2) (37w^2 - 32w + 13))^2\cdot \\ &\cdot & (2w^2 +2w -1) (2 w^2 + 2 w+5),  
\end{eqnarray*}
\begin{eqnarray*}
\Delta_2(Q)&:=& (b(P)+2)^2 - 4a(P)^2 \\ &=& \frac{1024}{81} (w-1)^4 w^2 (w+2)^4 (13 - 32 w + 37 w^2)^2 \cdot \\ &\cdot & \left(5476 w^{14} + 12432 w^{13} - 
   40896 w^{12} - 62128 w^{11} + 166392 w^{10} + \right. \\ & & \left. 60048 w^9 - 349536 w^8 + 202344 w^7 + 195732 w^6 - 353986 w^5 +\right. \\ & & \left. 227838 w^4 - 74214 w^3 + 
   10654 w^2 - 156 w + 9\right) 
\end{eqnarray*}
and 
$$\Delta_3(Q):=\Delta_1(Q)\cdot\Delta_2(Q)$$

Given a polynomial $R$ with rational coefficients with factorization $R(x)=\prod\limits_{k=1}^m R_k(x)^{a_k}$ over $\mathbb{Q}[x]$, denote by $R^{red}(x):=\prod\limits_{k=1}^m R_k(x)^{a_k (\textrm{mod }2)}$ its \emph{square-free reduction}. Note that the values of the parameter $x\in\mathbb{Z}$, resp. $\mathbb{Q}$, such that $R(x)$ is a square correspond to integral, resp. rational, points of the curve $z^2=R^{red}(x)$. In particular, by Siegel's theorem, if $R^{red}(x)$ has degree $\geq 3$, then $R(x)$ is not a square for all but finitely many values of $x\in\mathbb{Z}$, and, by Falting's theorem, if $R^{red}(x)$ has degree $\geq 5$, then $R(x)$ is not a square for all but finitely many values of $x\in\mathbb{Q}$. 

In our setting, we have that 
\begin{itemize}
\item $\Delta_1(P)^{red}(h) = 76 h^4 + 108 h^3 + 61 h^2 + 14 h +1$
\item $\Delta_1(P)^{red}(w) = 76w^4+20w^3+33w^2-52w+4$
\item $\Delta_2(P)^{red}(w) = w(2w^7+4w^6-6w^5+22w^4+71w^3+15w+54)$
\item $\Delta_1(Q)^{red}(w) = 3(2w^2 +2w -1) (2 w^2 + 2 w+5)$
\item $\Delta_2(Q)^{red}(w) = 5476 w^{14} + 12432 w^{13} - 
   40896 w^{12} - 62128 w^{11} \\ + 166392 w^{10} + 60048 w^9 - 349536 w^8 + 202344 w^7 + 195732 w^6 \\ - 353986 w^5 +   227838 w^4 - 74214 w^3 + 
   10654 w^2 - 156 w + 9$
\end{itemize} 
where all polynomials written in the five items above are irreducible over $\mathbb{Q}[x]$. In particular, since $\Delta_1(P)^{red}(h)$, $\Delta_1(P)^{red}(w)$ and $\Delta_1(Q)^{red}(w)$ have degree $4$, $\Delta_2(P)^{red}(w)$ has degree $8$, $\Delta_2(Q)^{red}(w)$ has degree $14$, and they don't have common factors, the discussion of the previous paragraph (based on Siegel's theorem and Faltings' theorem) apply to ensure that the discriminants 
\begin{itemize} 
\item $\Delta_1(P)$, $\Delta_2(P)$, $\Delta_3(P)=\Delta_1(P)\cdot \Delta_2(P)$,
\item $\Delta_1(Q)$, $\Delta_2(Q)$, $\Delta_3(Q):=\Delta_1(Q)\cdot \Delta_2(Q)$, and 
\item $\Delta_i(P)\cdot\Delta_j(Q)$, $1\leq i,j\leq 3$ 
\end{itemize}
are not squares for all but finitely many values of $w\in\mathbb{N}$ (or $h\in\mathbb{N}$). 

Moreover, the leading coefficients of $\Delta_i(P)^{red}$ and $\Delta_j(Q)^{red}$ are positive, so that these discriminants are positive for all but finitely many values of $w\in\mathbb{N}$ (or $h\in\mathbb{N}$). 

Therefore, since any monoid $\mathcal{G}$ containing the matrices $A$, $B$ and $C$ must contain the matrices $M:=A\cdot B$ and $N:=B\cdot C$ (whose characteristic polynomials are $P$ and $Q$ above), we can use Theorem~\ref{t.MMY} to conclude that any monoid $\mathcal{G}$ containing the matrices $A$, $B$ and $C$ is pinching and twisting. This proves Proposition~\ref{p.Prym-monoid}. 

Finally, as we already mentioned, by Theorem~\ref{t.Lyapunov}, the proof of Theorem~\ref{t.Prym} is also complete.

\section{Lyapunov exponents of variations of Hodge structures of higher weight}

In some recent talks, M. Kontsevich \cite{Kont13} discussed the possibility of extending the formula in \cite{EKZ} for sums of (non-negative) Lyapunov exponents of Teichm\"uller curves to more general contexts including variations of Hodge structures of higher weights. 

Abstractly, Kontsevich considers the following scenario. Let $C$ be a hyperbolic Riemann surface of finite area. Denote by $(\mathcal{E}, \nabla)$ a vector bundle $\mathcal{E}$ over $C$ with a flat connection $\nabla$. Observe that the data of $(\mathcal{E},\nabla)$ gives a linear cocycle over the geodesic flow on the hyperbolic Riemann surface $C$ and also a monodromy representation $\rho:\pi_1(C,c_0)\to GL(N,\mathbb{C})$ where $c_0\in C$ and $N$ is the rank of $\mathcal{E}$. Assuming that the fibers $\mathcal{E}_x$ of $\mathcal{E}$ are equipped with a measurable family of norms $\|.\|_x$ that are bounded near the cusps of $C$, one can check that the linear cocycle over the geodesic flow on $C$ induced by $(\mathcal{E},\nabla)$ satisfies the $L^1$ $\log$-integrability condition in Oseledets theorem whenever the monodromy representation $\rho:\pi_1(C,c_0)\to GL(N,\mathbb{C})$ is \emph{quasi-unipotent} near the cusps of $C$ (i.e., the spectra of the matrices obtained as images under $\rho$ of small loops around the cusps of $C$ are contained in the unit circle in the complex plane). In this context, we can apply Oseledets theorem to get Lyapunov exponents $\lambda_1\geq\dots\geq\lambda_N$, $N=\textrm{rank}(\mathcal{E})$, associated to the linear cocycle induced by $(\mathcal{E},\nabla)$ over $C$, and it is a natural question to try to compute these Lyapunov exponents using some geometrical information on $(\mathcal{E},\nabla)$.

One of the main examples of the situation described in the previous paragraph are \emph{variations of Hodge structures} associated to one-parameter deformations of compact K\"ahler manifolds. Here, the presentation of these examples will follow Voisin's book \cite{Voisin} (that we refer for basic definitions and more details). Let us consider a family $X_{c}$, $c\in C$, of (mutually diffeomorphic) compact K\"ahler manifolds parametrized by a hyperbolic Riemann surface $C$. The cohomology groups $H^k(X_c,\mathbb{C})$ form the fibers of a vector bundle $\mathcal{H}^k$ equipped with the (flat) Gauss-Mannin connection. Furthermore, the fibers $H^k(X_c,\mathbb{C})$ come with an integer lattice $H^k(X_c,\mathbb{C})=H^k(X_c,\mathbb{Z})\otimes\mathbb{C}$, and they have a \emph{Hodge decomposition} 
$$H^k(X_c,\mathbb{C})=\bigoplus\limits_{p+q=k}H^{p,q}(X_c)$$
where $H^{p,q}$ is the space of cohomology classes of type $(p,q)$ and a \emph{Hodge filtration} 
$$F^pH^k(X_c,\mathbb{C})=\bigoplus\limits_{r\geq p} H^{r,k-r}(X_c)$$
In the literature, this example is usually given when introducing the notion of general \emph{variations} of (integral) \emph{Hodge structures of weight $k$}. 

\begin{example}The Hodge bundle $H^1_g$ equipped with the Gauss-Manin connection over a Teichm\"uller curve is a variation of Hodge structures of weight one.
\end{example}

It is possible to show that the monodromy representations corresponding to variations of Hodge structures described above are quasi-unipotent near the cusps (see, e.g., Theorem 15.15 of Voisin's book \cite{Voisin}), so that we can use Oseledets theorem to talk about Lyapunov exponents associated to these monodromy representations.

As we already mentioned, the article \cite{EKZ} contains formulas for the sums of non-negative Lyapunov exponents of the Kontsevich-Zorich cocycle. Very roughly speaking, Kontsevich (and Forni) showed a \emph{formula} relating the sums of non-negative Lyapunov exponents of the Kontsevich-Zorich cocycle to the integral of the first Chern class of the ``middle part'' $F^1H^1_g:=H^{1,0}$ of the Hodge filtration.

Of course, as M\"oller suggested to Kontsevich, it is natural to try to generalize this formula for the sum of non-negative Lyapunov exponents associated to \emph{variations of Hodge structures} of \emph{higher weights}.

In order to test his ideas, Kontsevich studies certain prototypical algebro-geometrical examples of \emph{Calabi-Yau $3$-folds} (3CY for short)\footnote{Recall that a Calabi-Yau $n$-fold is a compact K\"ahler manifold of complex dimension $n$ with vanishing Ricci curvature.}. More concretely, there are \emph{several} families of 3CYs, and, among those, one finds 14 families of 3CY whose moduli spaces are isomorphic to $\overline{\mathbb{C}}-\{0,1,\infty\}$ (see, e.g., \cite{DM}). For each of these families $X^{(k)}_{c}$, $k=1,\dots,14$, $c\in C\simeq\overline{\mathbb{C}}-\{0,1,\infty\}$, we have the vector bundle $H^3(X^{(k)})$ whose fibers are the third cohomology groups $H^3(X^{(k)}_c,\mathbb{C})$ and  the Gauss-Manin connection over the hyperbolic Riemann surface $C:=\overline{\mathbb{C}}-\{0,1,\infty\}$. Thus, it makes sense to talk about the Lyapunov exponents in this context. 

Interestingly enough, Kontsevich found that the natural generalization of the formula for the sum of non-negative Lyapunov exponents in terms of the first Chern class of the ``middle part'' $F^2H^3:=H^{3,0}\oplus H^{2,1}$ of the Hodge filtration works exactly for $7$ of the $14$ families $X_c^{(k)}$, $k=1,\dots, 14$, and, as a matter of fact, the formula works precisely in the cases when the image of the corresponding monodromy representation $\rho$ is a ``thin'' group (in Sarnak's terminology). Also, Kontsevich observed that, in the remaining $7$ cases where the formula does not work, the sum of non-negative Lyapunov exponents are \emph{strictly} larger than the quantity provided by the first Chern class of the ``middle part'' $F^2H^3$ of the Hodge filtration (i.e., his ``formula'' becomes a strict lower bound in the $7$ ``bad'' cases). 

Among these 14 families $X_c^{(k)}$, $k=1,\dots,14$, of 3CY's parametrized by $c\in C\simeq\overline{\mathbb{C}}-\{0,1,\infty\}$, one has the so-called \emph{mirror quintic} 3CY's. In the literature, mirror quintic 3CY's were introduced in \cite{COGP} in their study of \emph{mirror symmetry}. 

For our purposes, it suffices to know that the Hodge numbers of a mirror quintic 3CY $X_c^{mq}$, $c\in C\simeq\overline{\mathbb{C}}-\{0,1,\infty\}$, are $h^{0,3}=h^{1,2}=h^{2,1}=h^{3,0}=1$ (where $h^{p,q}$ denotes the complex dimension of $H^{p,q}(X_c^{mq})$, so that the third cohomology group $H^3(X_c^{mq})$ of a mirror quintic 3CY $X_c^{mq}$ is four-dimensional. Therefore, since the natural intersection form on the integral lattice $H^3(X_c^{mq},\mathbb{Z})$ (induced by the cup product) is symplectic, we obtain a monodromy representation 
$$\rho:\pi_1(C)\to Sp(4,\mathbb{Z})$$
associated to the variations of Hodge structures of weight three of mirror quintic 3CY's. 

As we discussed above, one can use $\rho:\pi_1(C)\to Sp(4,\mathbb{Z})$ to define a cocycle over the geodesic flow on the hyperbolic Riemann surface $C\simeq\overline{\mathbb{C}}-\{0,1,\infty\}\simeq\mathbb{H}/\Gamma_0(2)$ (where $\Gamma_0(2)$ is the subgroup of $SL(2,\mathbb{Z})$ consisting of matrices whose lower-left entry is zero modulo two). By definition, the Lyapunov spectrum of this cocycle has the form $\lambda_1\geq\lambda_2\geq-\lambda_2\geq-\lambda_1$, and Kontsevich's formula for the sum $\lambda_1+\lambda_2$ in terms of the geometry of $\rho$ (first Chern class of the middle part of the Hodge filtration) \emph{works}. From his arguments, it is possible to show that $\lambda_1+\lambda_2>0$, so that $\lambda_1>0$. On the other hand, it is not clear how to deduce qualitative information on $\lambda_2$ (e.g., $\lambda_2>0$ and/or $\lambda_1>\lambda_2$) from the formula of Kontsevich.

In this section, we will show that Theorems \ref{t.Lyapunov} and \ref{t.MMY} allows us to deduce simplicity of the Lyapunov spectrum for a variations of Hodge structures of weight three of mirror quintic 3CY's. 

The main result of this section is:

\begin{theorem}\label{t.mirror-quintic} The Lyapunov spectrum $\lambda_1\geq\lambda_2\geq-\lambda_2\geq-\lambda_1$ of the monodromy representation of mirror quintic 3CY's is simple (i.e., $\lambda_1>\lambda_2>0$).
\end{theorem}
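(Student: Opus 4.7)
My plan is to reproduce the structure of the proof of Theorem~\ref{t.Lyapunov}(c) in the present variation-of-Hodge-structures setting, and then to verify its hypothesis via the Galois-theoretic criterion of Theorem~\ref{t.MMY} for two explicit matrices in the image of the monodromy representation $\rho:\pi_1(C)\to\mathrm{Sp}(4,\mathbb{Z})$ of the mirror quintic.

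The first step is the reduction to a random walk. Since $C\simeq\mathbb{H}/\Gamma_0(2)$ and $\Gamma_0(2)$ is a lattice in $\mathrm{SL}(2,\mathbb{R})$, Furstenberg's theorem supplies a probability measure $\nu$ of full support on $\Gamma_0(2)\simeq\pi_1(C,c_0)$ such that $(\mathrm{SO}(2,\mathbb{R}),\mathrm{Leb})$ is the Poisson boundary of $(\Gamma_0(2),\nu)$. Oseledets applied to the random walk on $\mathbb{H}$ shows, as in the proof of Theorem~\ref{t.Lyapunov}, that almost every trajectory tracks a geodesic ray with sublinear error. To transfer this tracking to the Gauss-Manin cocycle one needs an analog of Forni's estimate $\frac{d}{dt}\log\|G_t\|\leq 1$; in the VHS setting this is furnished by Griffiths' curvature inequalities for the Hodge bundle, combined with the quasi-unipotency of the monodromy at the three cusps $0,1,\infty$ recalled in the body of the paper. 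Running the same tracking argument as in the proof of Theorem~\ref{t.Lyapunov} then identifies the Lyapunov spectrum of the Gauss-Manin cocycle with that of the random walk on $\mathrm{Sp}(4,\mathbb{Z})$ whose law is $\rho(\nu)$. By the Avila-Viana criterion used in Theorem~\ref{t.Lyapunov}(c), it is enough to prove that the monoid $\rho(\Gamma_0(2))$ is pinching and twisting in $\mathrm{Sp}(4,\mathbb{R})$.

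The second step is to exploit the explicit description of the mirror quintic monodromy. The matrices $T_0,T_1,T_\infty\in\mathrm{Sp}(4,\mathbb{Z})$ around $0,1,\infty$ are classical: $T_0$ is maximally unipotent, $T_1$ is a symplectic Picard-Lefschetz reflection (hence also unipotent), and $T_\infty$ has finite order $5$, with characteristic polynomial $x^4+x^3+x^2+x+1$. None of these is individually pinching, so I would pass to hyperbolic products such as $M:=T_0 T_\infty$ and $N:=T_1 T_\infty$ (or short variants thereof), write down their reciprocal characteristic polynomials $P(x)=x^4+a(P)x^3+b(P)x^2+a(P)x+1$ and $Q(x)$ of the same palindromic form forced by symplecticity, and compute the three discriminants $\Delta_1,\Delta_2,\Delta_3$ of $P$ and of $Q$ together with the nine cross-products $\Delta_i(P)\Delta_j(Q)$. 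A direct arithmetic check (finitely many integers, with no continuous parameter) should verify that all fifteen quantities are positive non-squares in $\mathbb{Z}$; no Siegel-Faltings argument is required here, in contrast to the proof of Theorem~\ref{t.Prym}. Theorem~\ref{t.MMY} then yields pinching and twisting of the monoid generated by $M$ and $N$, and hence of $\rho(\Gamma_0(2))$; combined with the reduction of the first step, this gives $\lambda_1>\lambda_2>0$.

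The main obstacle is the conceptual step, not the arithmetic one. The Gauss-Manin cocycle lives over an open curve whose Hodge metric degenerates at the three punctures, so the Forni-type bound at the heart of the proof of Theorem~\ref{t.Lyapunov} must be replaced by its VHS analog via Griffiths' period-map estimates, and one must be careful to invoke Schmid-Landman quasi-unipotency at the cusps so that Oseledets' theorem can be applied at all. Once that reduction is in place, the verification of pinching and twisting is purely a finite calculation: should the naive choice $M=T_0T_\infty$, $N=T_1T_\infty$ happen to fail one of the discriminant tests, one replaces it by another short word in $T_0,T_1,T_\infty$ until the hypotheses of Theorem~\ref{t.MMY} are satisfied.
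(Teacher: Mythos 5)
Your proposal follows essentially the same route as the paper: Furstenberg's theorem plus geodesic tracking, a Forni-type estimate obtained from the distance-decreasing property of the period map (the paper phrases your Griffiths curvature inequality as the Ahlfors--Schwartz--Pick lemma together with the negativity of the holomorphic sectional curvatures of the period domain, and likewise invokes quasi-unipotency at the cusps via Voisin), and then Theorem~\ref{t.MMY} applied to two explicit short words in the monodromy generators. The paper takes Movasati's generators $M_0,M_1$ and the words $M_0^3M_1$, $M_0^4M_1$, whose discriminants $685$, $1485$, $3620$, $17920$ are visibly positive non-squares --- exactly the finite arithmetic check you correctly identified and deferred.
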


\begin{proof} For the proof of this result, we will need the following fact. For mirror quintic 3CY's, the corresponding monodromy representation $\rho:\pi_1(C)\to Sp(4,\mathbb{Z})$ was computed in several places of the literature (cf. \cite{CYY}, \cite{DM}, \cite{Movasati}, and \cite{vEvS}) and, for example, in \cite{Movasati}, it is shown that the following matrices 
$$M_0=\left(\begin{array}{cccc}1&1&0&0 \\ 0&1&0&0 \\ 5&5&1&0 \\ 0&-5&-1&1\end{array}\right)$$
and 
$$M_1=\left(\begin{array}{cccc}1&0&0&0 \\ 0&1&0&1 \\ 0&0&1&0 \\ 0&0&0&1\end{array}\right)$$
correspond to the image under $\rho$ of small loops in $C$ around $0$ and $1$. In particular, the image of the representation $\rho$ associated to mirror quintic 3CY's is the group generated by $M_0$ and $M_1$. 

We affirm that there are two matrices $A$ and $B$ in the group generated by $M_0$ and $M_1$ fitting the hypothesis of Theorem \ref{t.MMY}. In fact, this is not hard to show: for example, the matrices $A:=M_0^3\cdot M_1\in\mathcal{G}$ and $B:=M_0^4\cdot M_1\in\mathcal{G}$ have characteristic polynomials  
$$P(x) = x^4+31 x^3 + 71 x^2 + 31 x + 1$$
and 
$$Q(x) = x^4 + 66 x^3 + 186 x^2 + 66 x + 1$$
(resp.). Therefore, the corresponding discriminants are 
$$\Delta_1(P) = 685 = 5\times 137, \quad \Delta_2(P) = 1485 = 3^3\times 5\times 11$$
and 
$$\Delta_1(Q) = 3620 = 2^2\times 5\times 181, \quad \Delta_2(P) = 17920 = 2^9\times 5\times 7$$
so that none of the positive numbers $\Delta_i(P), \Delta_j(Q)$, and $\Delta_i(P)\Delta_j(Q)$ (for $1\leq i, j\leq 3$) are squares. 

Once we know that the image of $\rho$ is pinching and twisting, we want to apply to Theorem \ref{t.Lyapunov} to conclude the simplicity of the Lyapunov exponents. However, this is not completely straightforward because Theorem \ref{t.Lyapunov} concerns variations of Hodge structures of weight one, while our current setting concerns variations of Hodge structures of weight three. 

Fortunately, it is not difficult to adapt the proof of Theorem \ref{t.Lyapunov} to this case. Indeed, the fact that random products of loops around the cusps $0$, $1$ and $\infty$ of $C\simeq\overline{\mathbb{C}}-\{0,1,\infty\}\simeq\mathbb{H}/\Gamma_0(2)$ track geodesic rays in $\mathbb{H}/\Gamma_0(2)$ is still true (by Oseledets theorem), and, hence, we have only to justify the validity of an analog of ``Forni estimate'' saying that the norms of the monodromy matrices are controlled by the distances in the hyperbolic plane between random walks and geodesic rays (cf. \eqref{e.Forni-sublinear-error}). As it turns out, this is a consequence of the following argument\footnote{Actually, the same argument can be used to obtain an algebro-geometrical proof of Forni's estimate.}. We have a \emph{period map} $\mathcal{P}:\mathbb{H}\to\mathcal{D}$ providing a \emph{holomorphic} map between the universal cover $\mathbb{H}$ of $C\simeq \overline{\mathbb{C}}-\{0,1,\infty\}$ and the so-called period domain $\mathcal{D}$ (see Chapiter 10 of Voisin's book \cite{Voisin}). By definition, the monodromy representation $\rho$ is computed from the period map $\mathcal{P}$, so that the desired ``Forni estimate'' follow from the Ahlfors-Schwartz-Pick lemma saying that the holomorphic map $\mathcal{P}$ is a contraction from the hyperbolic plane $\mathbb{H}$ equipped with the hyperbolic metric and the period domain $\mathcal{D}$ equipped with the natural metric (induced by the natural intersection form). In other words, the desired control of the monodromy matrices in terms of the hyperbolic distance follows from the fact that the holomorphic sectional curvatures of the period domain $\mathcal{D}$ are negative. 

This completes the proof of the theorem.
\end{proof}

\section*{Acknowledgments} The authors are thankful to Alex Furman for
suggesting the strategy  of the proof of Theorem~\ref{t.Lyapunov}, to Pascal
Hubert and Erwan Lanneau for sharing their insights on the geometry of
Prym Teichm\"uller curves of genus $4$, to Martin M\"oller and Jean-Christophe Yoccoz for useful exchanges around the Galois-theoretical simplicity criterion in the article~\cite{MMY}, and to Pascal Hubert and Julien Grivaux for a careful reading of earlier versions of this work. 

Research  of  the first author is partially supported  by
NSF grants DMS 0244542, DMS 0604251 and DMS 0905912.

The second author was partially supported by the French ANR grant ``GeoDyM'' (ANR-11-BS01-0004) and by the 
Balzan Research Project of J. Palis.

\end{document}